\title{\textbf{A symmetric $\mathbf{\beta}$-model}\footnote{MSC
    Subject Classification: 03F35, 03C62, 03C25, 03D80, 03B30.}}
\author{Stephen G. Simpson\footnote{The research presented here was
    supported by NSF grant DMS-0070718.  This paper was written in the
    year 2000 but was not published until 2018.  The only change since
    2000 is that the references have been updated.}\\
  Department of Mathematics\\
  Vanderbilt University\\
  \href{http://www.math.psu.edu/simpson}{http://www.math.psu.edu/simpson}\\
  \href{mailto:sgslogic@gmail.com}{sgslogic@gmail.com}}
\date{First draft: April 14, 2000\\
  This draft: \today}
\newcommand{\HYP}{\mathrm{HYP}}
\newcommand{\REC}{\mathrm{REC}}
\newcommand{\Ltwo}{L_2}
\newcommand{\supp}{\mathrm{supp}}
\newcommand{\field}{\mathrm{field}}
\newcommand{\lh}{\mathrm{lh}}
\newcommand{\forces}{\Vdash}
\newcommand{\proves}{\vdash}
\newcommand{\PP}{\mathcal{P}}
\newcommand{\D}{\mathcal{D}}
\newcommand{\WKLo}{\mathsf{WKL}_0}
\newcommand{\ATRo}{\mathsf{ATR}_0}
\newcommand{\TIo}{\mathsf{TI}_0}
\theoremstyle{definition}
\newtheorem{thm}{Theorem}[section]
\newtheorem{que}[thm]{Question}
\newtheorem{rem}[thm]{Remark}
\newtheorem{cor}[thm]{Corollary}
\newtheorem{lem}[thm]{Lemma}
\begin{document}

\maketitle

\addcontentsline{toc}{section}{Abstract}

\begin{abstract}
  We prove that there exists a countable $\beta$-model in which, for
  all reals $X$ and $Y$, $X$ is definable from $Y$ if and only $X$ is
  hyperarithmetical in $Y$.  We also obtain some related results and
  pose some related questions.
\end{abstract}

\tableofcontents

\newpage

\begin{center}

\end{center}

\section{The main results}
\label{sec:main}

Our context is the study of $\omega$-models of subsystems of second
order arithmetic \cite[Chapter VIII]{sosoa}.  As in \cite[Chapter
VII]{sosoa}, a $\beta$\emph{-model} is an $\omega$-model $M$ such
that, for all $\Sigma^1_1$ sentences $\varphi$ with parameters from
$M$, $\varphi$ is true if and only if $M\models\varphi$.  Theorems
\ref{thm:betadef} and \ref{thm:betadef2} below are an interesting
supplement to the results on $\beta$-models which have been presented
in Simpson \cite[\S\S VII.2 and VIII.6]{sosoa}.

Let $\HYP$ denote the set of hyperarithmetical reals.  It is well
known that, for any $\beta$-model $M$, $\HYP$ is properly included in
$M$, and each $X\in\HYP$ is definable in $M$.

\begin{thm}\label{thm:betadef}
  There exists a countable $\beta$-model satisfying
  \begin{center}
    $\forall X\,($if $X$ is definable, then $X\in\HYP)$.
  \end{center}
\end{thm}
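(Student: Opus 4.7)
The plan is to construct $M$ by a symmetric forcing, patterned on Cohen's classical symmetric models in set theory but calibrated so that the resulting symmetric submodel is itself a $\beta$-model. I begin with a notion of forcing $\mathbb{Q}$, working over a ground whose reals are exactly $\HYP$, such that for every $\mathbb{Q}$-generic real $G$ the collection $\HYP\cup\{G\}$ has a well-defined minimal $\beta$-closure $\beta(G)$. A Steel-style labelled-tree forcing, suitably adapted to the $\beta$-setting, plays this role; the crucial point is to avoid any extraneous $\beta$-``root'' beyond $\HYP$ itself. I then pass to the countable product $\mathbb{P}=\mathbb{Q}^{*\omega}$ with finite supports, producing a sequence $\vec{G}=(G_n)_{n<\omega}$ of mutually $\mathbb{Q}$-generic reals, and let the symmetric group $S_\omega$ act on $\mathbb{P}$ by permuting coordinates.

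The symmetric model is $M=\bigcup_{E\neq\emptyset}M_E$, where $M_E=\beta(\vec{G}_E)$ denotes the minimal $\beta$-model containing $\HYP\cup\{G_n:n\in E\}$. Each $M_E$ is a $\beta$-model and the family is directed by inclusion, so the directed union $M$ is itself a countable $\beta$-model: any tree $T\in M$ lies in some $M_E$; if $T$ is illfounded then $M_E$ already contains a path; and if $T\in M_E$ is well-founded then no larger $M_{E'}$ can add a path without violating its own $\beta$-model property. Now suppose $X\in M$ is parameter-free definable in $M$. Each $\pi\in S_\omega$ induces an automorphism $\tilde\pi$ of $M$ sending $M_E$ to $M_{\pi(E)}$, and parameter-free definability forces $\tilde\pi(X)=X$. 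Choose $E$ with $X\in M_E$ and a $\pi$ with $\pi(E)\cap E=\emptyset$; then $X=\tilde\pi(X)\in M_{\pi(E)}$, so $X\in M_E\cap M_{\pi(E)}$.

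The conclusion rests on the mutual-genericity fact that $M_E\cap M_{E'}=\HYP$ whenever $E\cap E'=\emptyset$: any real in both closures must have names depending on neither block of generics, and a standard symmetric-forcing argument (a condition deciding $X$ from both sides of a permutation swapping disjoint support sets) shows such a real already lies in the common ground $\HYP$. Granted this, $X\in\HYP$, as required. The principal obstacle is not the symmetry argument, which is standard once the framework is in place, but the ground-level set-up: producing a $\mathbb{Q}$ whose generic extensions have a minimal $\beta$-closure anchored at $\HYP$ (rather than at some strictly larger $\beta$-model), and proving the $\beta$-model analogue of the classical product-forcing intersection $M_E\cap M_{E'}=\HYP$. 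These two ingredients --- the $\HYP$-rooted single-step forcing and the uniform mutual-genericity descent --- are where the real work of the proof lies.
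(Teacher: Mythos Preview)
The high-level strategy is right, but there is a concrete gap in your symmetry step. You claim each $\pi\in S_\omega$ induces an automorphism $\tilde\pi$ of $M$ with $\tilde\pi(M_E)=M_{\pi(E)}$, and then invoke ``parameter-free definability forces $\tilde\pi(X)=X$''. This cannot work as stated: an $\omega$-model has no nontrivial automorphisms, since any automorphism must fix $\omega$ pointwise and hence fix every real. If $\tilde\pi$ were an automorphism of $M$ it would be the identity, contradicting $M_E\neq M_{\pi(E)}$ for disjoint $E,\pi(E)$. The symmetry that actually does the work is homogeneity of the \emph{forcing}: $p\Vdash\varphi$ iff $\pi(p)\Vdash\pi(\varphi)$, together with $\pi(\varphi)=\varphi$ for parameter-free $\varphi$. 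The paper exploits this not via automorphisms of one model but by taking \emph{two} generic sequences and showing the resulting $\beta$-models $M,M'$ are elementarily equivalent (if $p\Vdash\varphi$ and $q\Vdash\lnot\varphi$, move $p$ by a permutation to have support disjoint from $q$ and derive a contradiction). One then chooses the second generic so that $M\cap M'=\HYP$; the real defined by $\varphi$ in $M$ equals, bit by bit via the sentences $\exists X(\varphi(X)\wedge n\in X)$, the real defined by $\varphi$ in $M'$, hence lies in $\HYP$.

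On the forcing itself, the paper's route is also more direct than yours. Rather than a single-step $\mathbb{Q}$, a finite-support product, and minimal $\beta$-closures (whose existence and ``anchoring at $\HYP$'' you rightly flag as the hard part), the paper uses Gandy forcing: conditions are finite sets of requirements ``$X_{n_1}\oplus\cdots\oplus X_{n_k}\in S_e$'' for $\Sigma^1_1$ sets $S_e$, a condition being legal iff some sequence meets it. A generic $\langle G_n\rangle$ then has $\{G_n:n\in\omega\}$ already a $\beta$-model---no closure step is needed, and there is no ``minimal $\beta$-model over $G$'' to construct. The intersection $M\cap M'=\HYP$ is arranged by a fusion argument showing the second generic can avoid any prescribed countable set of non-$\HYP$ reals, rather than via a product mutual-genericity lemma for disjoint blocks.
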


\begin{proof}
  Fix a recursive enumeration $S_e$, $e\in\omega$, of the $\Sigma^1_1$
  sets of reals.  If $p$ is a finite subset of
  $\omega\times\omega^{<\omega}$, say that $\langle
  X_n\rangle_{n\in\omega}$ \emph{meets} $p$ if
  $X_{n_1}\oplus\cdots\oplus X_{n_k}\in S_e$ for all $(e,\langle
  n_1,\dots,n_k\rangle)\in p$.  Let $\PP$ be the set of $p$ such that
  there exists $\langle X_n\rangle_{n\in\omega}$ meeting $p$.  Put
  $p\le q$ if and only if $p\supseteq q$.  Say that $\D\subseteq\PP$
  is \emph{dense} if for all $p\in\PP$ there exists $q\in\D$ such that
  $q\le p$.  Say that $\D$ is \emph{definable} if it is definable over
  the $\omega$-model $\HYP$, i.e., arithmetical in the complete
  $\Pi^1_1$ subset of $\omega$.  Say that $\langle
  G_n\rangle_{n\in\omega}$ is \emph{generic} if for every dense
  definable $\D\subseteq\PP$ there exists $p\in\D$ such that $\langle
  G_n\rangle_{n\in\omega}$ meets $p$.  We can show that for every
  $p\in\PP$ there exists a generic $\langle G_n\rangle_{n\in\omega}$
  meeting $p$.  (This is a fusion argument, a la Gandy forcing.)
  Clearly $\{G_n:n\in\omega\}$ is a $\beta$-model.  We can also show
  that, if $C$ is countable and $C\cap\HYP=\emptyset$, then there
  exists a generic $\langle G_n\rangle_{n\in\omega}$ such that
  $C\cap\{G_n:n\in\omega\}=\emptyset$.
  
  Let $\Ltwo(\langle X_n\rangle_{n\in\omega})$ be the language of
  second order arithmetic with additional set constants $X_n$, $n\in
  \omega$.  Let $\varphi$ be a sentence of $\Ltwo(\langle
  X_n\rangle_{n\in\omega})$.  We say that $p$ \emph{forces} $\varphi$,
  written $p\forces\varphi$, if for all generic $\langle
  G_n\rangle_{n\in\omega}$ meeting $p$, the $\beta$-model
  $\{G_n:n\in\omega\}$ satisfies $\varphi[\langle
  X_n/G_n\rangle_{n\in\omega}]$.  It can be shown that, for all
  generic $\langle G_n\rangle_{n\in\omega}$, the $\beta$-model
  $\{G_n:n\in\omega\}$ satisfies $\varphi[\langle
  X_n/G_n\rangle_{n\in\omega}]$ if and only if $\langle
  G_n\rangle_{n\in\omega}$ meets some $p$ such that $p\forces\varphi$.
  
  If $\pi$ is a permutation of $\omega$, define an action of $\pi$ on
  $\PP$ and $\Ltwo(\langle X_n\rangle_{n\in\omega})$ by
  $\pi(p)=\{(e,\langle\pi(n_1),\dots,\pi(n_k)\rangle):(e,\langle
  n_1,\dots,n_k\rangle)\in p\}$ and $\pi(X_n)=X_{\pi(n)}$.  It is
  straightforward to show that $p\forces\varphi$ if and only if
  $\pi(p)\forces\pi(\varphi)$.  The \emph{support} of $p\in\PP$ is
  defined by $\supp(p)=\bigcup\{\{n_1,\dots,n_k\}:(e,\langle
  n_1,\dots,n_k\rangle)\in p\}$.  Clearly if $p,q\in\PP$ and
  $\supp(p)\cap\supp(q)=\emptyset$, then $p\cup q\in\PP$.
  
  We claim that if $\langle G_n\rangle_{n\in\omega}$ and $\langle
  G'_n\rangle_{n\in\omega}$ are generic, then the $\beta$-models
  $\{G_n:n\in\omega\}$ and $\{G'_n:n\in\omega\}$ satisfy the same
  $\Ltwo$-sentences.  Suppose not.  Then for some $p,q\in\PP$ we have
  $p\forces\varphi$ and $q\forces\lnot\,\varphi$, for some
  $\Ltwo$-sentence $\varphi$.  Let $\pi$ be a permutation of $\omega$
  such that $\supp(\pi(p))\cap\supp(q)=\emptyset$.  Since
  $\pi(\varphi)=\varphi$, we have $\pi(p)\forces\varphi$, hence
  $\pi(p)\cup q\forces\varphi$, a contradiction.  This proves our
  claim.
  
  Finally, let $M=\{G_n:n\in\omega\}$ where $\langle
  G_n\rangle_{n\in\omega}$ is generic.  Suppose $A\in M$ is definable
  in $M$.  Let $\langle G'_n\rangle_{n\in\omega}$ be generic such that
  $M'=\{G'_n:n\in\omega\}$ has $M\cap M'=\HYP$.  Let $\varphi(X)$ be
  an $\Ltwo$-formula with $X$ as its only free variable, such that
  $M\models(\exists$ exactly one $X)\,\varphi(X)$, and
  $M\models\varphi(A)$.  Then $M'\models(\exists$ exactly one
  $X)\,\varphi(X)$.  Let $A'\in M'$ be such that
  $M'\models\varphi(A')$.  Then for each $n\in\omega$, we have that
  $n\in A$ if and only if $M\models\exists X\,(\varphi(X)$ and $n\in
  X)$, if and only if $M'\models\exists X\,(\varphi(X)$ and $n\in X)$,
  if and only if $n\in A'$.  Thus $A=A'$.  Hence $A\in\HYP$.  This
  completes the proof.
\end{proof}

\begin{rem}
  Theorem \ref{thm:betadef} has been announced without proof by
  Friedman \nocite{icm74}\cite[Theorem 4.3]{f-icm}.  Until now, a
  proof of Theorem \ref{thm:betadef} has not been available.
\end{rem}

We now improve Theorem \ref{thm:betadef} as follows.

Let $\le_\HYP$ denote hyperarithmetical reducibility, i.e., $X\le_\HYP
Y$ if and only if $X$ is hyperarithmetical in $Y$.

\begin{thm}\label{thm:betadef2}
  There exists a countable $\beta$-model satisfying
  \begin{quote}
    $(*)\qquad\forall X\,\forall Y\,($if $X$ is definable from
    $Y$, then $X\le_\HYP Y)$.
  \end{quote}
\end{thm}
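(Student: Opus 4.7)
The plan is to relativize the proof of Theorem~\ref{thm:betadef}, using a parameter $Y$ in place of no parameter. Let $M = \{G_n : n\in\omega\}$ be a countable $\beta$-model obtained as in that proof, suppose $A \in M$ is definable in $M$ from $Y \in M$, and fix $n_0$ with $G_{n_0} = Y$. I aim to show $A \le_\HYP Y$ by producing a second generic model $M'$ with $Y \in M'$ and $M \cap M' = \{Z : Z \le_\HYP Y\}$, and then running the symmetry argument with permutations restricted to fix $n_0$.

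First I would prove a relativized avoidance lemma: if $C$ is countable and $C \cap \{Z : Z \le_\HYP Y\} = \emptyset$, then there exists a generic $\langle G'_n\rangle_{n\in\omega}$ with $G'_{n_0} = Y$ and $C \cap \{G'_n : n\in\omega\} = \emptyset$. The fusion argument of Theorem~\ref{thm:betadef} goes through with the extra initial constraint that $X_{n_0}$ be interpreted as $Y$, and with each diagonalization step relativized to $Y$ as a parameter. Applying this with $C = M \setminus \{Z : Z \le_\HYP Y\}$, and using the standard fact that any $\beta$-model containing $Y$ contains every real hyperarithmetical in $Y$, I obtain $M' = \{G'_n : n\in\omega\}$ with $Y \in M'$ and $M \cap M' = \{Z : Z \le_\HYP Y\}$.

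Next I would rerun the symmetry argument for $\Ltwo$-sentences $\varphi(X_{n_0})$ in which $X_{n_0}$ is the only set constant. For any permutation $\pi$ of $\omega$ fixing $n_0$, one has $\pi(\varphi(X_{n_0})) = \varphi(X_{n_0})$, and $p \forces \psi$ iff $\pi(p) \forces \pi(\psi)$ as before. If $M \models \varphi(Y)$ and $M' \models \lnot\varphi(Y)$, then by the forcing theorem there are $p,q$ met by $\langle G_n\rangle,\langle G'_n\rangle$ with $p \forces \varphi(X_{n_0})$ and $q \forces \lnot\varphi(X_{n_0})$. Pick $\pi$ fixing $n_0$ with $\supp(\pi(p)) \cap \supp(q) \subseteq \{n_0\}$; a witnessing sequence for $\pi(p) \cup q \in \PP$ is obtained by setting $X_m = G_{\pi^{-1}(m)}$ on $\supp(\pi(p)) \setminus \{n_0\}$, $X_m = G'_m$ on $\supp(q) \setminus \{n_0\}$, and $X_{n_0} = Y$, where the two halves agree. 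Then $\pi(p) \cup q \forces \varphi \wedge \lnot\varphi$, a contradiction. Hence $M$ and $M'$ satisfy the same $\Ltwo$-sentences in the parameter $X_{n_0}$.

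Applying this to the defining formula of $A$ over $Y$, exactly as in the last paragraph of the proof of Theorem~\ref{thm:betadef}, I obtain an $A' \in M'$ with $A = A'$, whence $A \in M \cap M' = \{Z : Z \le_\HYP Y\}$, as required. The main obstacle is the relativized avoidance lemma: the fusion must simultaneously respect genericity, the fixed initial value $G'_{n_0} = Y$, and the diagonalization against each $X \in C$, the last step depending crucially on $X \not\le_\HYP Y$ to ensure that any condition can be extended so as to rule out $X$ appearing as a generic.
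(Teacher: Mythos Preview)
Your overall strategy---relativize the symmetry argument of Theorem~\ref{thm:betadef} using permutations fixing $n_0$, and intersect two generic models containing $Y$---is exactly the paper's strategy. The paper likewise takes $M=\{G_n:n\in\omega\}$ with $\langle G_n\rangle$ generic, and proves $(*)$ holds there. But the paper organizes the argument differently, and the difference exposes a step you have glossed over.

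The paper introduces the relativized poset $\PP^Y=\{p\in\PP:\exists\langle X_n\rangle\text{ meeting }p\text{ with }X_0=Y\}$ and the notion ``generic over $Y$'' (meeting all dense subsets of $\PP^Y$ definable over $\HYP(Y)$). Lemma~\ref{lem:betadef} is then the straight relativization you describe, carried out entirely for $\PP^Y$-generics. The real work is Lemma~\ref{lem:gen2}: \emph{if $\langle G_n\rangle$ is generic, then it is generic over $G_0$}. This is what lets the given $\langle G_n\rangle$ participate in the relativized symmetry argument alongside a freshly built $\PP^Y$-generic $\langle G'_n\rangle$. The proof of Lemma~\ref{lem:gen2} is not routine: given $p$ forcing that $\D^{X_0}$ is dense in $\PP^{X_0}$, one fixes $p'\le p$ and $q'\le p$ with $p'\forces q'\in\D^{X_0}$, encodes the $\Sigma^1_1$ set $S'=\{X_0:\exists\langle X_n\rangle\text{ meeting }p'\}$ as a single condition $\{(e,\langle 0\rangle)\}$, and then uses a support-disjointness permutation argument (fixing $0$) to show $\{(e,\langle 0\rangle)\}\forces q'\in\D^{X_0}$ and $q'\cup\{(e,\langle 0\rangle)\}\in\PP$.

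Your proposal instead asserts a ``relativized avoidance lemma'': an \emph{originally} generic $\langle G'_n\rangle$ with $G'_{n_0}=Y$ and avoiding $C$. For the forcing theorem you invoke to apply to $\langle G'_n\rangle$, you need original genericity; but to keep $G'_{n_0}=Y$, your fusion must stay inside $\PP^Y$. That requires, for every $\HYP$-definable dense $\D\subseteq\PP$ and every $p\in\PP^Y$, an extension $q\le p$ with $q\in\D\cap\PP^Y$. You say the fusion ``goes through with the extra initial constraint,'' but this density-restriction is precisely the nontrivial point, and establishing it needs the same $\Sigma^1_1$-projection trick as Lemma~\ref{lem:gen2}. (Alternatively, if your $\langle G'_n\rangle$ is only generic over $Y$, then you need $\langle G_n\rangle$ to be generic over $Y$ as well---again Lemma~\ref{lem:gen2}.) You flag the avoidance against $C$ as the main obstacle, but in the paper's organization that step is routine once relativized; the genuine obstacle is making the \emph{given} generic compatible with the relativized forcing, and your write-up does not yet address it.
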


\medskip

The $\beta$-model which we shall use to prove Theorem
\ref{thm:betadef2} is the same as for Theorem \ref{thm:betadef},
namely $M=\{G_n:n\in\omega\}$ where $\langle G_n\rangle_{n\in\omega}$
is generic.  In order to see that $M$ has the desired property, we
first relativize the proof of Theorem \ref{thm:betadef}, as follows.
Given $Y$, let $\PP^Y$ be the set of $p\in\PP$ such that there exists
$\langle X_n\rangle_{n\in\omega}$ meeting $p$ with $X_0=Y$.
(Obviously $0$ plays no special role here.)  Say that $\langle
G_n\rangle_{n\in\omega}$ is \emph{generic over} $Y$ if, for every
dense $\D^Y\subseteq\PP^Y$ definable from $Y$ over
$\HYP(Y)=\{X:X\le_\HYP Y\}$, there exists $p\in\D^Y$ such that
$\langle G_n\rangle_{n\in\omega}$ meets $p$.

\begin{lem}\label{lem:betadef}
  If $\langle G_n\rangle_{n\in\omega}$ is generic over $Y$, then
  $G_0=Y$, and $\{G_n:n\in\omega\}$ is a $\beta$-model satisfying
  $\forall X\,($if $X$ is definable from $Y$, then $X\le_\HYP Y)$.
\end{lem}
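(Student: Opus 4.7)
The plan is to relativize the proof of Theorem~\ref{thm:betadef} by replacing $\PP$, $\HYP$, and the forcing relation with their $Y$-analogues $\PP^Y$, $\HYP(Y)$, and $p\forces^Y\varphi$; density, fusion, and truth-lemma arguments then run over $\HYP(Y)$ rather than $\HYP$. That $\{G_n:n\in\omega\}$ is a $\beta$-model follows from the relativized fusion argument exactly as in the unrelativized case. That $G_0=Y$ is not automatic from the definition of $\PP^Y$, but follows by genericity: for each $n\in\omega$ the set of $p\in\PP^Y$ containing a pair $(e,\langle 0\rangle)$ with $S_e$ asserting the correct truth value of ``$n\in X_0$'' relative to $Y$ is dense in $\PP^Y$ (any witness sequence for a given $p\in\PP^Y$ already has $X_0=Y$, so such extensions are legitimate) and arithmetical in $Y$, hence definable from $Y$ over $\HYP(Y)$.

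The heart of the argument is the symmetry claim relativized to $Y$: if $\langle G_n\rangle$ and $\langle G'_n\rangle$ are both generic over $Y$, then the two $\beta$-models $M$ and $M'$ they generate satisfy the same $\Ltwo$-sentences in which $X_0$ (interpreted as $Y$ in both models) may occur as a parameter. The key modification relative to Theorem~\ref{thm:betadef} is that I restrict attention to permutations $\pi$ of $\omega$ \emph{fixing $0$}, so that $Y=X_0$ is preserved and $\pi(\varphi)=\varphi$ for the relevant $\Ltwo$-formulas. Suppose for contradiction $p\forces^Y\varphi$ and $q\forces^Y\lnot\,\varphi$; pick such $\pi$ with $\supp(\pi(p))\cap\supp(q)\subseteq\{0\}$. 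The crux is to verify $\pi(p)\cup q\in\PP^Y$. For this, take witness sequences $\langle A_n\rangle$ for $\pi(p)$ and $\langle B_n\rangle$ for $q$, each satisfying $A_0=B_0=Y$, and amalgamate them: set $X_0=Y$, $X_n=A_n$ for $n\in\supp(\pi(p))\setminus\{0\}$, $X_n=B_n$ for $n\in\supp(q)\setminus\{0\}$, and $X_n$ arbitrary elsewhere. The two non-zero supports are disjoint, so this is well-defined and meets $\pi(p)\cup q$ with $X_0=Y$; the standard contradiction then follows.

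For the main conclusion I would apply the relativized avoidance statement — for any countable $C$ disjoint from $\HYP(Y)$ there is a $Y$-generic $\langle G'_n\rangle$ with $C\cap\{G'_n:n\in\omega\}=\emptyset$ — to obtain $\langle G'_n\rangle$ generic over $Y$ with $M\cap M'=\HYP(Y)$. If $A\in M$ is definable from $Y$ in $M$ via $\varphi(X,Y)$ uniquely satisfied, the symmetry claim produces a unique $A'\in M'$ with $M'\models\varphi(A',Y)$, and the pointwise argument at the end of the proof of Theorem~\ref{thm:betadef} yields $A=A'\in M\cap M'=\HYP(Y)$, i.e., $A\le_\HYP Y$.

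The main obstacle is the symmetry step. One must cut the permutation group down to those fixing $0$ (to preserve $Y$) and then verify that $\pi(p)\cup q$ is still in $\PP^Y$ even though the two supports may both contain the index $0$. This hinges on the compatibility of the two witness sequences at the single overlapping index, which is automatic because both are constrained to set $X_0=Y$; everything else is a routine relativization.
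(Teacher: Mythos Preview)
Your proposal is correct and follows exactly the route the paper takes: the paper's own proof consists of the single sentence that the lemma is ``a straightforward relativization to $Y$ of the proof of Theorem~\ref{thm:betadef}.'' You have in fact supplied the details the paper omits---in particular the restriction to permutations fixing $0$ and the verification that the amalgamated condition $\pi(p)\cup q$ lies in $\PP^Y$ because the witness sequences agree at index $0$---which are precisely the points where the relativized argument differs from the original.
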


\begin{proof}
  The proof of this lemma is a straightforward relativization to $Y$
  of the proof of Theorem \ref{thm:betadef}.
\end{proof}

Consequently, in order to prove Theorem \ref{thm:betadef2}, it
suffices to prove the following lemma.

\begin{lem}\label{lem:gen2}
  If $\langle G_n\rangle_{n\in\omega}$ is generic, then $\langle
  G_n\rangle_{n\in\omega}$ is generic over $G_0$.
\end{lem}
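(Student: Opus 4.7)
The strategy is to reformulate the desired relativized genericity as the forcing of a single $\Ltwo$-sentence, and then argue density of the corresponding set of forcing conditions. Fix a dense $\D^Y\subseteq\PP^Y$ definable from $Y$ over $\HYP(Y)$; there is a uniform formula $\theta^*(z,Y)$ of bounded complexity (arithmetic in the complete $\Pi^1_1(Y)$ subset of $\omega$, uniformly in $Y$) with $z\in\D^Y \iff \theta^*(z,Y)$. Form the $\Ltwo(\langle X_n\rangle_{n\in\omega})$-sentence
\[
  \Phi \ \equiv\ \exists z\,\Bigl(\theta^*(z,X_0)\ \wedge\ \bigwedge_{(e,\langle n_1,\dots,n_k\rangle)\in z} X_{n_1}\oplus\cdots\oplus X_{n_k}\in S_e\Bigr),
\]
which asserts that the sequence meets some $z\in\D^{X_0}$. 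By the forcing theorem established in the proof of Theorem \ref{thm:betadef}, $M\models\Phi$ iff $\langle G_n\rangle$ meets some $p\in\E:=\{p\in\PP:p\forces\Phi\}$. Since $\E$ is definable over $\HYP$ (from the complexity bound on $\theta^*$ plus the standard analysis of $\forces$), by genericity it will suffice to show that $\E$ is dense in $\PP$.

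For density I would argue by contradiction. Suppose $p_0\forces\neg\Phi$ for some $p_0\in\PP$. Pick a generic $\langle G_n\rangle$ meeting $p_0$ and set $Y=G_0$; then $p_0\in\PP^Y$, and density of $\D^Y$ in $\PP^Y$ yields $q\le p_0$ with $q\in\D^Y$. The plan is now to manufacture a \emph{second} generic sequence $\langle G'_n\rangle$ meeting $q$ and satisfying $G'_0=Y$. Any such $\langle G'_n\rangle$ would witness $\Phi$ via $q\in\D^{G'_0}$ while simultaneously meeting $p_0$ (since $q\le p_0$), contradicting $p_0\forces\neg\Phi$.

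The construction of $\langle G'_n\rangle$ is the main obstacle. I would run a fusion starting from $q$, analogous to the generic-existence fusion of Theorem \ref{thm:betadef}, but carrying an additional bookkeeping requirement: at each stage $n$, the condition $q_n$ contains $\Sigma^1_1$ bit-constraints of the form $(e,\langle 0\rangle)\in q_n$ enforcing $X_0(k)=Y(k)$ for each $k<n$. To extend $q_n$ to meet the next $\HYP$-definable dense set $\D'\subseteq\PP$, one invokes density of $\D'$ in $\PP$ to pick a raw extension $q_n'\in\D'$ of $q_n$, and then, if $q_n'$ interacts badly with the forthcoming bit-constraint $X_0(n)=Y(n)$, one applies a permutation of $\omega$ fixing $0$ to push the support of $q_n'\setminus q_n$ into fresh indices and then glues on the bit-constraint using the disjoint-support compatibility $\supp(p)\cap\supp(q)=\emptyset\Rightarrow p\cup q\in\PP$ from Theorem \ref{thm:betadef}. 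The essential input is the permutation symmetry of $\PP$ fixing the coordinate $0$, together with the fact that $Y=G_0$, being itself generic, cannot be pinned down incompatibly by any $\HYP$-definable condition; this is what makes each fusion step feasible and ultimately delivers the $\langle G'_n\rangle$ needed to close the argument.
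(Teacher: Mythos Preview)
Your reduction to a density statement is along the right lines and matches the paper's framing, but the construction of the auxiliary generic $\langle G'_n\rangle$ with $G'_0=Y$ has a genuine gap. The permutation trick you invoke cannot do the work you assign it: a permutation $\pi$ fixing $0$ leaves every clause of the form $(e,\langle 0,\ldots\rangle)$ in place, so if the raw extension $q_n'\in\D'$ imposes a $\Sigma^1_1$ constraint on $X_0$ that excludes $Y$, no such permutation restores compatibility with the next bit of $Y$. Your closing remark that ``$Y$, being itself generic, cannot be pinned down incompatibly by any $\HYP$-definable condition'' is the right intuition, but making it precise is essentially the content of the lemma; as written the fusion step is not justified. (There is also a smaller slip: you invoke density of $\D^Y$ in $\PP^Y$ for the $Y=G_0$ you pick inside the density argument, but you only fixed the defining formula $\theta^*$, not the hypothesis that $\D^{G_0}$ is dense for this particular $G_0$. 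The paper handles this by restricting attention to $p$ that force ``$\D^{X_0}$ is dense in $\PP^{X_0}$''.)

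The paper sidesteps your obstacle with a projection trick. Having found $p'\le p$ and $q'\le p$ with $p'\forces q'\in\D^{X_0}$, one notes that $S'=\{X_0:\langle X_n\rangle\text{ meets }p'\}$ is $\Sigma^1_1$, say $S'=S_e$, and argues that the single condition $\{(e,\langle 0\rangle)\}$ already forces $q'\in\D^{X_0}$: any $p''\le\{(e,\langle 0\rangle)\}$ forcing the negation can, by a permutation fixing $0$, be made to have support meeting $\supp(p')$ only in $\{0\}$, whence $p'\cup\pi(p'')\in\PP$ gives a contradiction. It then follows easily that $q''=q'\cup\{(e,\langle 0\rangle)\}\in\PP$ and $q''$ forces both $q'\in\D^{X_0}$ and ``$\langle X_n\rangle$ meets $q'$'', so no second generic is needed.
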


\begin{proof}
  It suffices to show that, for all $p$ forcing $(\D^{X_0}$ is dense
  in $\PP^{X_0})$, there exists $q\le p$ forcing $\exists
  r\,(r\in\D^{X_0}$ and $\langle X_n\rangle_{n\in\omega}$ meets $r)$.
  
  Assume $p\forces(\D^{X_0}$ is dense in $\PP^{X_0})$.  Since
  $p\forces p\in\PP^{X_0}$, it follows that $p\forces\exists q\,(q\le
  p$ and $q\in\D^{X_0})$.  Fix $p'\le p$ and $q'\le p$ such that
  $p'\forces q'\in\D^{X_0}$.  Put $S'=\{X_0:\langle
  X_n\rangle_{n\in\omega}$ meets $p'\}$.  Then $S'$ is a $\Sigma^1_1$
  set, so let $e\in\omega$ be such that $S'=S_e$.  Claim 1:
  $\{(e,\langle0\rangle)\}\forces q'\in\D^{X_0}$.  If not, let
  $p''\le\{(e,\langle0\rangle)\}$ be such that $p''\forces
  q'\notin\D^{X_0}$.  Let $\pi$ be a permutation such that $\pi(0)=0$
  and $\supp(p')\cap\supp(\pi(p''))=\{0\}$.  Then
  $p'\cup\pi(p'')\in\PP$ and $\pi(p'')\forces q'\notin\D^{X_0}$, a
  contradiction.  This proves Claim 1.
  
  Claim 2: $q'\cup\{(e,\langle0\rangle)\}\in\PP$.  To see this, let
  $\langle G'_n\rangle_{n\in\omega}$ be generic meeting
  $\{(e,\langle0\rangle)\}$.  By Claim 1 we have $q'\in\D^{G'_0}$.
  Hence $q'\in\PP^{G'_0}$, i.e., there exists $\langle
  X_n\rangle_{n\in\omega}$ meeting $q'$ with $X_0=G'_0$.  Thus
  $\langle X_n\rangle_{n\in\omega}$ meets
  $q'\cup\{(e,\langle0\rangle)\}$.  This proves Claim 2.  Finally, put
  $q''=q'\cup\{(e,\langle0\rangle)\}$.  Then $q''\le q'\le p$ and
  $q''\forces(q'\in\D^{X_0}$ and $\langle X_n\rangle_{n\in\omega}$
  meets $q')$.  This proves our lemma.
\end{proof}

The proof of Theorem \ref{thm:betadef2} is immediate from Lemmas
\ref{lem:betadef} and \ref{lem:gen2}.

\section{Conservation results}
\label{sec:cons}

In this section we generalize the construction of \S\ref{sec:main} to
a wider setting.  We then use this idea to obtain some conservation
results involving the scheme $(*)$ of Theorem \ref{thm:betadef2}.

Two important subsystems of second order arithmetic are $\ATRo$
(arithmetical transfinite recursion with restricted induction) and
$\Pi^1_\infty$-$\TIo$ (the transfinite induction scheme).  For general
background, see Simpson \cite{sosoa}.  It is known \cite[\S
VII.2]{sosoa} that $\ATRo\subseteq\Pi^1_\infty$-$\TIo$, and that every
$\beta$-model is a model of $\Pi^1_\infty$-$\TIo$.

Let $(N,\mathcal{S})$ be a countable model of $\ATRo$, where
$\mathcal{S}\subseteq P(|N|)$.  Define
$\PP_{(N,\mathcal{S})}=\{p:(N,\mathcal{S})\models(p$ is a finite
subset of $\omega\times\omega^{<\omega}$ and there exists $\langle
X_n\rangle_{n\in\omega}$ meeting $p)\}$.  The notion of $\langle
G_n\rangle_{n\in|N|}$ being \emph{generic over} $(N,\mathcal{S})$ is
defined in the obvious way.  As in \S\ref{sec:main}, the basic forcing
lemmas can be proved.  Let $\langle G_n\rangle_{n\in|N|}$ be generic
over $(N,\mathcal{S})$.  Put $\mathcal{S}'=\{G_n:n\in|N|\}$.

\begin{lem}\label{lem:star}
  $(N,\mathcal{S}')$ satisfies the scheme $(*)$ of Theorem
  {\rm\ref{thm:betadef2}}.
\end{lem}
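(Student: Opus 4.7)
The plan is to run the entire argument of Section~\ref{sec:main} inside $(N,\mathcal{S})$, with this model playing the role that the true hyperarithmetical universe $\HYP$ played there. Since $(N,\mathcal{S})\models\ATRo$, the notion ``$X$ is hyperarithmetical in $Y$'' makes good sense internally: it asserts that for some well-ordering $W\in\mathcal{S}$ and jump hierarchy along $W$ starting from $Y$, $X$ is arithmetical in that hierarchy. The basic forcing lemmas of Section~\ref{sec:main} --- existence of generics by a Gandy-style fusion, symmetry of forcing under permutations of $\omega$, and the truth lemma for sentences of $\Ltwo(\langle X_n\rangle_{n\in\omega})$ --- all survive this generalization, because $\ATRo$ is precisely the strength needed to iterate jump hierarchies along definable well-orderings, and that is what the fusion requires.

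The next step is to relativize to an arbitrary parameter $Y\in\mathcal{S}'$: pick $n_0\in|N|$ with $G_{n_0}=Y$, and let $\PP^Y_{(N,\mathcal{S})}$ consist of those conditions $p$ such that $(N,\mathcal{S})\models\exists\langle X_n\rangle_{n\in\omega}$ meeting $p$ with $X_{n_0}=Y$. Call $\langle G_n\rangle_{n\in|N|}$ \emph{generic over $Y$ within $(N,\mathcal{S})$} if it meets every dense $\D^Y\subseteq\PP^Y_{(N,\mathcal{S})}$ that is, in $(N,\mathcal{S})$, definable from $Y$ over the internal $\HYP(Y)$. I would then prove two internal analogues. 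First, a version of Lemma~\ref{lem:betadef}: any sequence generic over $Y$ within $(N,\mathcal{S})$ yields a structure satisfying ``if $X$ is definable from $Y$ then $X$ is hyperarithmetical in $Y$'', by a line-by-line relativization of the definability argument at the end of the proof of Theorem~\ref{thm:betadef}. Second, a version of Lemma~\ref{lem:gen2}: any $\langle G_n\rangle_{n\in|N|}$ generic over $(N,\mathcal{S})$ is automatically generic over each $G_n$ within $(N,\mathcal{S})$, proved by copying the Claim~1 / Claim~2 argument verbatim, with $(N,\mathcal{S})$ replacing $\HYP$ and a permutation fixing $n_0$ rather than $0$.

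The main obstacle will be the careful verification that the forcing machinery really does internalize to $(N,\mathcal{S})$. Two points deserve extra care. First, the Gandy-style fusion for generic existence must only invoke dense sets that are definable inside $(N,\mathcal{S})$ over its internal $\HYP$; the countability of $(N,\mathcal{S})$ makes the diagonal construction possible, and the $\ATRo$ axioms of $(N,\mathcal{S})$ supply the required transfinite iterations. Second, the identification of $n\in A$ with $(\exists X)(\varphi(X)\wedge n\in X)$ used in the definability argument at the end of the proof of Theorem~\ref{thm:betadef} requires $\Sigma^1_1$-absoluteness between $(N,\mathcal{S})$ and $(N,\mathcal{S}')$, which is exactly the analogue of the $\beta$-model property and flows out of the construction as in Section~\ref{sec:main}. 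Once both checks are in place, applying the two internal analogues to any $Y\in\mathcal{S}'$ delivers the scheme $(*)$ in $(N,\mathcal{S}')$.
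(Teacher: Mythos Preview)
Your plan is exactly what the paper does: the paper's proof of this lemma is one sentence, namely that it is a straightforward generalization of the arguments of \S\ref{sec:main} to the countable model $(N,\mathcal{S})$ of $\ATRo$, and you have correctly identified both the relativized Lemma~\ref{lem:betadef} and the factoring Lemma~\ref{lem:gen2} as the pieces to carry over.

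One point to sharpen: your second ``extra care'' item is not quite the right ingredient. The definability argument at the end of the proof of Theorem~\ref{thm:betadef} does not use $\Sigma^1_1$-absoluteness between the ground model and the generic extension. What it uses is (i) the permutation-symmetry argument showing that any two generic extensions $(N,\mathcal{S}')$ and $(N,\mathcal{S}'')$ satisfy the same $\Ltwo$-sentences, and (ii) the ability to build a second generic $\langle G'_n\rangle$ with $\mathcal{S}'\cap\mathcal{S}''\subseteq\HYP_{(N,\mathcal{S})}(Y)$. The equivalence ``$n\in A\Leftrightarrow(N,\mathcal{S}')\models\exists X(\varphi(X)\wedge n\in X)\Leftrightarrow(N,\mathcal{S}'')\models\exists X(\varphi(X)\wedge n\in X)\Leftrightarrow n\in A'$'' then gives $A=A'\in\HYP_{(N,\mathcal{S})}(Y)$. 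So the absoluteness you need is full elementary equivalence between two generic extensions, not $\Sigma^1_1$-absoluteness between ground and extension; once you adjust this, your outline goes through.
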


\begin{proof}
  The proof is a straightforward generalization of the arguments of
  \S\ref{sec:main}.
\end{proof}

\begin{lem}\label{lem:pres}
  Let $\psi$ be a $\Pi^1_2$ sentence with parameters from $|N|$.  If
  $(N,\mathcal{S})\models\psi$, then $(N,\mathcal{S}')\models\psi$.
\end{lem}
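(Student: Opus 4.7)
The plan is to adapt a standard $\Pi^1_2$-preservation argument to the present forcing. Write $\psi$ as $\forall X\,\exists Y\,\theta(X,Y,\vec{n})$ with $\theta$ arithmetic and $\vec{n}\in|N|$, and assume $(N,\mathcal{S})\models\psi$. Given any $A\in\mathcal{S}'$, say $A=G_{n_0}$, I need to produce $B\in\mathcal{S}'$ with $(N,\mathcal{S}')\models\theta(A,B,\vec{n})$. The strategy is to exhibit a dense definable subset $\D\subseteq\PP_{(N,\mathcal{S})}$ each of whose conditions pins a $\theta$-witness at some index $m$, then invoke genericity to extract $B=G_m$.

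Concretely, since arithmetic sets are $\Sigma^1_1$, fix $e^*\in\omega$ in the recursive enumeration of $\Sigma^1_1$ sets so that $S_{e^*}=\{Z_0\oplus Z_1:\theta(Z_0,Z_1,\vec{n})\}$, and set
$$\D=\{q\in\PP_{(N,\mathcal{S})}:\exists m\,((e^*,\langle n_0,m\rangle)\in q)\},$$
which is definable in $(N,\mathcal{S})$ from $n_0$ and $e^*$. To verify density of $\D$ below $p\in\PP_{(N,\mathcal{S})}$, I would work inside $(N,\mathcal{S})$: pick a sequence $\langle X_n\rangle$ witnessing $p\in\PP_{(N,\mathcal{S})}$, put $Z:=X_{n_0}$, apply $(N,\mathcal{S})\models\psi$ to produce some $W\in\mathcal{S}$ with $\theta(Z,W,\vec{n})$, choose some $m\in|N|\setminus\supp(p)$, and redefine $X_m:=W$. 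The modified sequence still meets $p$ (because $m\notin\supp(p)$, so no constraint of $p$ is touched) and now additionally satisfies $X_{n_0}\oplus X_m=Z\oplus W\in S_{e^*}$. Hence $q:=p\cup\{(e^*,\langle n_0,m\rangle)\}$ lies in $\PP_{(N,\mathcal{S})}$, refines $p$, and belongs to $\D$.

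With density in hand, genericity of $\langle G_n\rangle_{n\in|N|}$ over $(N,\mathcal{S})$ applied to $\D$ yields some $q\in\D$ met by the generic, hence an $m$ with $(e^*,\langle n_0,m\rangle)\in q$, which entails $G_{n_0}\oplus G_m\in S_{e^*}$, i.e., $\theta(G_{n_0},G_m,\vec{n})$. Because $\theta$ is arithmetic and its parameters lie in $|N|$, its truth is absolute between the ambient structure and $(N,\mathcal{S}')$, so $(N,\mathcal{S}')\models\theta(A,G_m,\vec{n})$ and $B:=G_m$ is the desired witness. The extension from $\forall X\,\exists Y$ to multiple quantifiers $\forall\vec{X}\,\exists\vec{Y}$ is a routine variation, replacing $(e^*,\langle n_0,m\rangle)$ by a tuple $(e^*,\langle n_1,\dots,n_k,m_1,\dots,m_l\rangle)$ and selecting the $m_j$ fresh outside $\supp(p)$.

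The principal subtle point I anticipate is the density step, specifically installing the witness $W$ at a fresh index $m$. This rests on $\supp(p)$ being $N$-finite (so that some $m\notin\supp(p)$ exists, even if $|N|$ is nonstandard) and on the ability inside $(N,\mathcal{S})\models\ATRo$ to pass from the internally quantified $\Sigma^1_1$ existentials appearing in the argument (both the sequence $\langle X_n\rangle$ witnessing $p\in\PP_{(N,\mathcal{S})}$ and the witness $W$) to actual elements of $\mathcal{S}$; this is where the $\Sigma^1_1$-choice content available in $\ATRo$ is being tacitly used. Everything else is bookkeeping.
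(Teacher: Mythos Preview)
Your argument is correct, but it takes a more laborious route than the paper's. You decompose $\psi$ as $\forall X\,\exists Y\,\theta$ with $\theta$ arithmetic and then explicitly materialize the witness $Y$ at a fresh coordinate $m$, finishing by absoluteness of the arithmetic $\theta$. The paper instead absorbs the inner existential into the $\Sigma^1_1$ normal form: writing $\psi$ as $\forall X\,(X\in S_e)$ for a single $\Sigma^1_1$ class $S_e$, it observes that for each $n$ the set $\{p\in\PP_{(N,\mathcal{S})}:(e,\langle n\rangle)\in p\}$ is dense, since any sequence witnessing $p\in\PP_{(N,\mathcal{S})}$ already has $X_n\in S_e$ (because $(N,\mathcal{S})\models\psi$), so $p\cup\{(e,\langle n\rangle)\}\in\PP_{(N,\mathcal{S})}$ automatically. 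Hence $\emptyset\forces X_n\in S_e$ for every $n$, giving $\emptyset\forces\psi$ and $(N,\mathcal{S}')\models\psi$ directly via the forcing relation. The paper's route therefore needs no fresh index $m$, no explicit witness $W$, no modification of the sequence, and no appeal to $\Sigma^1_1$-choice; all of the work you flag in your last paragraph simply disappears. Your route, on the other hand, has the minor expository advantage that it exhibits the witness $G_m\in\mathcal{S}'$ explicitly rather than leaving it hidden inside the $\Sigma^1_1$ condition.
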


\begin{proof}
  Write $\psi$ as $\forall X\,(X\in S_e)$ for some fixed $e\in|N|$.
  If $(N,\mathcal{S})\models\psi$, then for each $n\in|N|$ we have
  that $\{p\in\PP_{(N,\mathcal{S})}:(e,\langle n\rangle)\in p\}$ is
  dense in $\PP_{(N,\mathcal{S})}$, hence $\emptyset\forces X_n\in
  S_e$.  Thus $\emptyset\forces\forall X\,(X\in S_e)$, i.e.,
  $\emptyset\forces\psi$, so $(N,\mathcal{S}')\models\psi$.
\end{proof}

\begin{rem}
  Since $(N,\mathcal{S})\models\ATRo$ and $\ATRo$ is axiomatized by a
  $\Pi^1_2$ sentence, it follows by Lemma \ref{lem:pres} that
  $(N,\mathcal{S}')\models\ATRo$.  Lemma \ref{lem:pres} also implies
  that $(N,\mathcal{S})$ and $(N,\mathcal{S}')$ satisfy the same
  $\Pi^1_1$ sentences with parameters from $|N|$.  From this it
  follows that the recursive well orderings of $(N,\mathcal{S})$ and
  $(N,\mathcal{S}')$ are the same, and that
  $\HYP_{(N,\mathcal{S})}=\HYP_{(N,\mathcal{S}')}$.  It can also be
  shown that $\HYP_{(N,\mathcal{S})}=\mathcal{S}\cap\mathcal{S}'$.
\end{rem}

We now have:
\begin{thm}\label{thm:atrstar}
  $\ATRo+(*)$ is conservative over $\ATRo$ for $\Sigma^1_2$ sentences.
\end{thm}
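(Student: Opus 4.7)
The plan is to reduce $\Sigma^1_2$-conservativity to a model-existence problem and then apply the forcing construction already set up in this section. By G\"odel's completeness theorem, it suffices to prove: for every $\Sigma^1_2$ sentence $\sigma$, if $\ATRo \not\proves \sigma$ then $\ATRo + (*) \not\proves \sigma$. So I would pick a countable model $(N,\mathcal{S}) \models \ATRo + \lnot\,\sigma$, note that $\lnot\,\sigma$ is $\Pi^1_2$, and aim to build $(N,\mathcal{S}') \models \ATRo + (*) + \lnot\,\sigma$.

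The model $(N,\mathcal{S}')$ will be exactly the one constructed just before Lemma~\ref{lem:star}: choose $\langle G_n \rangle_{n\in|N|}$ generic over $(N,\mathcal{S})$ and set $\mathcal{S}' = \{G_n : n \in |N|\}$. Existence of such a generic follows from the basic forcing lemmas alluded to there, which are the natural relativization to $(N,\mathcal{S})$ of the fusion argument in the proof of Theorem~\ref{thm:betadef}. I would then assemble three ingredients: (i) $(N,\mathcal{S}') \models (*)$ by Lemma~\ref{lem:star}; (ii) $(N,\mathcal{S}') \models \ATRo$ by Lemma~\ref{lem:pres} together with the observation recorded in the preceding remark that $\ATRo$ is axiomatizable by a single $\Pi^1_2$ sentence; and (iii) $(N,\mathcal{S}') \models \lnot\,\sigma$, again by Lemma~\ref{lem:pres} applied directly to $\lnot\,\sigma$, whose parameters lie in $|N|$ as the statement of Lemma~\ref{lem:pres} permits. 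Putting these together, $(N,\mathcal{S}') \models \ATRo + (*) + \lnot\,\sigma$, witnessing $\ATRo + (*) \not\proves \sigma$.

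The genuine obstacle has been paid for upstream: the fusion argument for existence of generics, the symmetry argument behind Lemma~\ref{lem:star}, and the density argument behind Lemma~\ref{lem:pres} all do the real work. The only thing to verify when writing the proof out is that the generic construction goes through over an arbitrary countable model of $\ATRo$, i.e.\ that $\ATRo$ suffices to develop the hyperarithmetical comprehension underlying the definitions of $\PP_{(N,\mathcal{S})}$ and of density ``definable from $Y$ over $\HYP(Y)$'' inside $(N,\mathcal{S})$. This is standard — $\ATRo$ proves exactly this fragment of hyperarithmetical analysis — so no new technical work is required, and the proof of the theorem is a brief assembly of Lemmas~\ref{lem:star} and~\ref{lem:pres}.
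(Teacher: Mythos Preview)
Your proposal is correct and matches the paper's proof essentially line for line: take a countable model of $\ATRo+\lnot\,\sigma$, pass to $(N,\mathcal{S}')$ via the generic construction, and invoke Lemmas~\ref{lem:star} and~\ref{lem:pres} (the latter both for $\ATRo$, via its $\Pi^1_2$ axiomatization, and for $\lnot\,\sigma$). There is nothing to add or change.
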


\begin{proof}
  Let $\varphi$ be a $\Sigma^1_2$ sentence.  Suppose
  $\ATRo\not\proves\varphi$.  Let $(N,\mathcal{S})$ be a countable
  model of $\ATRo+\lnot\,\varphi$.  Let $\mathcal{S}'=\{G_n:n\in|N|\}$
  as above.  Since $\lnot\,\varphi$ is a $\Pi^1_2$ sentence, we have
  by Lemmas \ref{lem:star} and \ref{lem:pres} that
  $(N,\mathcal{S}')\models\ATRo+(*)+\lnot\,\varphi$.  Thus
  $\ATRo+(*)\not\proves\varphi$.
\end{proof}

In order to obtain a similar result for $\Pi^1_\infty$-$\TIo$, we
first prove:

\begin{lem}\label{lem:ordrec}
  $(N,\mathcal{S}')\models$ ``all ordinals are recursive'', i.e.,
  ``every well ordering is isomorphic to a recursive well ordering''.
\end{lem}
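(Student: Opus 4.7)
The plan is to show that any $W\in\mathcal{S}'$ with $(N,\mathcal{S}')\models W\in\mathrm{WO}$ is isomorphic, within $(N,\mathcal{S}')$, to a recursive well-ordering of $(N,\mathcal{S})$. Since the recursive well-orderings of $(N,\mathcal{S})$ and $(N,\mathcal{S}')$ agree (by the Remark following Lemma~\ref{lem:pres}), and every recursive well-ordering of $(N,\mathcal{S})$ is present in $\mathcal{S}'\supseteq\HYP_{(N,\mathcal{S})}$, this will suffice.

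Fix such a $W$, write $W=G_n$, and pick a condition $p$ in the generic $\langle G_m\rangle_{m\in|N|}$ with $p\forces(X_n\in\mathrm{WO})$. I would proceed by a density argument: namely, I would show that the collection of $q\le p$ forcing $X_n\cong R$ for some recursive well-ordering $R$ is dense below $p$, whence genericity produces the desired isomorphism inside $(N,\mathcal{S}')$.

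To establish density, I would fix $q\le p$ and analyze, in $(N,\mathcal{S})$, the $\Sigma^1_1$ set
\[
B_q=\bigl\{X:\exists\,\langle Y_m\rangle_m\text{ meeting }q\text{ with }Y_n=X\bigr\}.
\]
The central claim is that the order types of the well-orderings in $B_q$ are bounded by a recursive ordinal $\alpha$ of $(N,\mathcal{S})$. Granting this, one strengthens $q$ by adjoining a $\Sigma^1_1$-constraint of the form $X_n\cong R$ for a fixed recursive $R$ of length $\alpha$, and checks the resulting condition lies in $\PP_{(N,\mathcal{S})}$ via an argument analogous to Claim~2 in the proof of Lemma~\ref{lem:gen2} (using the symmetry of the forcing to find a compatible witness). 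The bound on lengths itself comes from Kreisel's $\Sigma^1_1$-boundedness theorem, which is available inside $(N,\mathcal{S})\models\ATRo$; see \cite[\S V.1]{sosoa}.

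The main obstacle is the central claim that well-ordering lengths in $B_q$ are $\Sigma^1_1$-boundable: naive ground-model sequences meeting $q$ need not have $Y_n\in\mathrm{WO}$ even though $q\forces X_n\in\mathrm{WO}$, since $q\forces$ only controls generic completions. I would bridge this gap using the symmetry argument from the proof of Theorem~\ref{thm:betadef} (all generic $\beta$-models satisfy the same parameter-free $\Ltwo$-sentences) together with the $\Pi^1_1$-absoluteness of well-foundedness for $\beta$-models: this will let me replace $B_q\cap\mathrm{WO}$ by a $\Sigma^1_1$ superset of the possible order types of $G_n$ over generics meeting $q$, to which boundedness applies. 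Once this packaging is carried out, the rest of the argument is routine and the lemma follows.
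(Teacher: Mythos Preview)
Your approach differs from the paper's, and the gap you flag is genuine; unfortunately the bridge you sketch does not close it. First, you invoke ``$\Pi^1_1$-absoluteness of well-foundedness for $\beta$-models,'' but Lemma~\ref{lem:ordrec} is stated in the setting of \S\ref{sec:cons}, where $(N,\mathcal{S})$ is merely a countable model of $\ATRo$ and $(N,\mathcal{S}')$ need not be a $\beta$-model, so that absoluteness is not available. Second, and more seriously, you never specify the ``$\Sigma^1_1$ superset of the possible order types of $G_n$.'' For $\Sigma^1_1$-boundedness you need a $\Sigma^1_1$ class \emph{contained in} $\mathrm{WO}$; the natural candidates ($B_q\cap\mathrm{WO}$, or the set of recursive $R$ with $q\not\forces X_n\not\cong R$) are not $\Sigma^1_1$, and symmetry of the forcing does not manufacture one. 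Even granting a bound $\alpha$, your extension step ``adjoin $X_n\cong R$ for a fixed recursive $R$ of length $\alpha$'' is not justified: different generics through $q$ may realize different order types below $\alpha$, and you have not argued that any single $R$ is compatible with $q$.

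The paper sidesteps all of this by forcing a single $\Sigma^1_1$ property globally rather than bounding well-orderings one at a time. The Gandy/Kreisel/Tait basis theorem, provable in $\ATRo$ \cite[Theorem~VIII.6.7]{sosoa}, gives for every $p\in\PP_{(N,\mathcal{S})}$ a witness $\langle X_m\rangle$ meeting $p$ with $\omega_1^{X_m}=\omega_1^{\mathrm{CK}}$ for all $m$. Since the predicate $\omega_1^X=\omega_1^{\mathrm{CK}}$ is itself $\Sigma^1_1$ (provably in $\ATRo$), it is some $S_e$, and the condition $(e,\langle n\rangle)$ is therefore compatible with every $p$; hence $\emptyset\forces\forall X\,(\omega_1^X=\omega_1^{\mathrm{CK}})$, which is exactly ``all ordinals are recursive.'' The paper also records a shorter alternative: by Lemma~\ref{lem:star} we already have $(N,\mathcal{S}')\models(*)$, so $O$ cannot exist in $(N,\mathcal{S}')$ (it would be definable but not hyperarithmetical), and over $\ATRo$ this is equivalent to ``all ordinals are recursive.''
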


\begin{proof}
  Recall from \cite[\S\S VIII.3 and VIII.6]{sosoa} that all of the
  basic results of hyperarithmetical theory are provable in $\ATRo$.
  In particular, by \cite[Theorem VIII.6.7]{sosoa}, the
  Gandy/Kreisel/Tait Theorem holds in $\ATRo$.  Thus for all
  $p\in\PP_{(N,\mathcal{S})}$ we have
  \begin{center}
    $(N,\mathcal{S})\models$ there exists $\langle X_n\rangle_{n\in\omega}$
    meeting $p$ such that $\forall
    n\,(\omega_1^{X_n}=\omega_1^{\mathrm{CK}})$.
  \end{center}
  Now, it is provable in $\ATRo$ that the predicate
  $\omega_1^X=\omega_1^{\mathrm{CK}}$ is $\Sigma^1_1$.  Thus we have
  $\emptyset\forces\forall X\,(\omega_1^X=\omega_1^{\mathrm{CK}})$,
  i.e., $\emptyset\forces$ ``all ordinals are recursive''.  This
  proves our lemma.
\end{proof}

\begin{rem}
  An alternative proof of Lemma \ref{lem:ordrec} is to note that
  $\ATRo+(*)\proves$ ``$O$ does not exist'', because $O$ would be
  definable but not hyperarithmetical.  And ``$O$ does not exist'' is
  equivalent over $\ATRo$ to ``all ordinals are recursive''.  (Here
  $O$ denotes the complete $\Pi^1_1$ set of integers.  See \cite[\S
  VIII.3]{sosoa}.)
\end{rem}

\begin{thm}\label{thm:tistar}
  $\Pi^1_\infty$-$\TIo+(*)$ is conservative over
  $\Pi^1_\infty$-$\TIo$ for $\Sigma^1_2$ sentences.
\end{thm}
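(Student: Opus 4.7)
The plan is to mirror the proof of Theorem \ref{thm:atrstar}. Given a $\Sigma^1_2$ sentence $\varphi$ with $\Pi^1_\infty$-$\TIo\not\proves\varphi$, I would choose a countable model $(N,\mathcal{S})\models\Pi^1_\infty$-$\TIo+\lnot\,\varphi$ and form $(N,\mathcal{S}')$ via a generic $\langle G_n\rangle_{n\in|N|}$ as in \S\ref{sec:cons}. Lemma~\ref{lem:star} yields $(*)$ in $(N,\mathcal{S}')$, and Lemma~\ref{lem:pres} yields $\lnot\,\varphi$ (since $\lnot\,\varphi$ is $\Pi^1_2$). The sole remaining task is to verify $(N,\mathcal{S}')\models\Pi^1_\infty$-$\TIo$.

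Since $\Pi^1_\infty$-$\TIo$ is a scheme rather than a single $\Pi^1_2$ sentence, Lemma~\ref{lem:pres} does not apply directly. My plan is to reduce to transfinite induction on \emph{recursive} well-orderings. By Lemma~\ref{lem:ordrec}, every well-ordering in $(N,\mathcal{S}')$ is isomorphic (via a set in $\mathcal{S}'$) to a recursive one; pulling back along such an isomorphism reduces any TI instance in $(N,\mathcal{S}')$ to one on a recursive $\prec$. By the Remark following Lemma~\ref{lem:pres}, the recursive well-orderings coincide in $(N,\mathcal{S})$ and $(N,\mathcal{S}')$. It thus suffices to show, for every recursive well-ordering $\prec$ and every $\Ltwo$-formula $\theta(n,\vec{X})$ with parameters from $\mathcal{S}'$, that TI on $\prec$ for $\theta$ holds in $(N,\mathcal{S}')$.

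For this I would transfer TI through the forcing relation. Writing the parameters as $\vec{G_{n_i}}$, by the truth lemma for this forcing some $p$ meeting $\langle G_n\rangle$ forces the hypothesis $\forall n\,(\forall m\prec n\,\theta(m,\vec{X_{n_i}})\to\theta(n,\vec{X_{n_i}}))$. Let $\theta^*(n):=(p\forces\theta(n,\vec{X_{n_i}}))$, regarded as an $\Ltwo$-formula over $(N,\mathcal{S})$. Using the forcing clauses---crucially that $m\prec n$ is recursive, so $p\forces\forall m\prec n\,\theta(m)$ iff $p\forces\theta(m)$ for every $m\prec n$---together with modus ponens under $\forces$, one checks that $\theta^*$ satisfies the TI hypothesis on $\prec$ inside $(N,\mathcal{S})$. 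Applying $\Pi^1_\infty$-$\TIo$ in $(N,\mathcal{S})$ then gives $\forall n\,\theta^*(n)$, i.e.\ $p\forces\forall n\,\theta(n,\vec{X_{n_i}})$, whence $(N,\mathcal{S}')\models\forall n\,\theta(n,\vec{G_{n_i}})$, as required.

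The main obstacle I anticipate is ensuring that $\theta^*$ really is an $\Ltwo$-formula over $(N,\mathcal{S})$, at a complexity at which $(N,\mathcal{S})$ can apply its own $\Pi^1_\infty$-$\TIo$. This depends on the ``basic forcing lemmas'' mentioned (but not spelled out in detail) in \S\ref{sec:cons}: one needs the forcing relation for each fixed $\theta$ to be $\Ltwo$-definable over $(N,\mathcal{S})$ by a formula of controlled complexity. Given the Gandy-style, fusion-based definition of genericity, this should be standard, but it needs to be verified carefully because $(N,\mathcal{S})$ lacks full second-order comprehension.
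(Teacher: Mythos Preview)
Your proposal is correct and follows the same strategy as the paper: reduce to transfinite induction along \emph{recursive} well-orderings via Lemma~\ref{lem:ordrec}, then transfer TI from $(N,\mathcal{S})$ to $(N,\mathcal{S}')$ using definability of the forcing relation. The only difference is cosmetic: the paper argues via the least-element form (given $p\forces\exists n\,\varphi(n)$, set $A=\{n:p\not\forces\lnot\varphi(n)\}$, take its ${<}_e$-least element $a$ in $(N,\mathcal{S})$, and extend to $q\le p$ forcing that $a$ is least), whereas you argue via the progressive form (set $\theta^*(n)\equiv p\forces\theta(n)$ and apply TI in $(N,\mathcal{S})$ to $\theta^*$). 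Both versions rest on exactly the same ingredient you flagged---that for each fixed $\theta$ the relation $p\forces\theta$ is $\Ltwo$-definable over $(N,\mathcal{S})$---which the paper also invokes without further detail.
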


\begin{proof}
  By Lemma \ref{lem:ordrec} it suffices to prove: if
  $(N,\mathcal{S})\models$ transfinite induction for recursive well
  orderings, then $(N,\mathcal{S}')\models$ transfinite induction for
  recursive well orderings.  Let $e\in|N|$ be such that
  $(N,\mathcal{S})\models$ ``${<}_e$ is a recursive well ordering''.
  Suppose $p\forces\exists n\,(n\in\field({<}_e)$ and $\varphi(n))$.
  Put $A=\{n\in\field({<}_e):p\not\forces\lnot\,\varphi(n)\}$.
  Clearly $A\ne\emptyset$.  By definability of forcing, $A$ is
  definable over $(N,\mathcal{S})$.  Hence there exists $a\in A$ such
  that $(N,\mathcal{S})\models$ ``$a$ is the ${<}_e$-least element of
  $A$''.  For each $n<_ea$ we have $p\forces\lnot\,\varphi(n)$, but
  $p\not\forces\lnot\,\varphi(a)$, so let $q\le p$ be such that
  $q\forces\varphi(a)$.  Then $q\forces$ ``$a$ is the ${<}_e$-least
  $n$ such that $\varphi(n)$''.  Thus $(N,\mathcal{S}')\models$
  transfinite induction for recursive well orderings.
\end{proof}

\begin{rem}
  Theorems \ref{thm:atrstar} and \ref{thm:tistar} are best possible,
  in the sense that we cannot replace $\Sigma^1_2$ by $\Pi^1_2$.  An
  example is the $\Pi^1_2$ sentence ``all ordinals are recursive'',
  which is provable in $\ATRo+(*)$ but not in $\Pi^1_\infty$-$\TIo$.
\end{rem}

\section{Recursion-theoretic analogs}
\label{sec:wkl}

The results of \S\S\ref{sec:main} and \ref{sec:cons} are in the realm
of hyperarithmetical theory.  We now present the analogous results in
the realm of recursion theory, concerning models of $\WKLo$.  For
background on this topic, see Simpson \cite[\S\S VIII.2 and
XI.2]{sosoa}.

\medskip

Let $\REC$ denote the set of recursive reals.  It is well known that,
for any $\omega$-model $M$ of $\WKLo$, $\REC$ is properly included in
$M$, and each $X\in\REC$ is definable in $M$.  The recursion-theoretic
analog of Theorem \ref{thm:betadef} is:

\begin{thm}\label{thm:wkldef}
  There exists a countable $\omega$-model of $\WKLo$ satisfying
  \begin{center}
    $\forall X\,($if $X$ is definable, then $X\in\REC)$.
  \end{center}
\end{thm}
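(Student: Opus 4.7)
The plan is to imitate the proof of Theorem~\ref{thm:betadef}, with recursion theory replacing hyperarithmetical theory: $\Pi^0_1$ subsets of $2^\omega$ take the role formerly played by the $\Sigma^1_1$ sets, and $\REC$ replaces $\HYP$. Fix a recursive enumeration $S_e$, $e\in\omega$, of the $\Pi^0_1$ subsets of $2^\omega$, and define $\PP$ as the set of finite $p\subseteq\omega\times\omega^{<\omega}$ such that some $\langle X_n\rangle_{n\in\omega}$ \emph{meets} $p$, meaning $X_{n_1}\oplus\cdots\oplus X_{n_k}\in S_e$ for each $(e,\langle n_1,\ldots,n_k\rangle)\in p$. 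By compactness of $2^\omega$, membership in $\PP$ is arithmetical (in fact $\Pi^0_2$) in $p$. Call $\D\subseteq\PP$ \emph{definable} if it is definable over the $\omega$-model $\REC$, and call $\langle G_n\rangle_{n\in\omega}$ \emph{generic} if, for every dense definable $\D$, it meets some $p\in\D$.

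Next I would reproduce the technical lemmas of Theorem~\ref{thm:betadef} nearly verbatim. A fusion argument shows that every $p\in\PP$ is met by some generic sequence, and that for every countable $C$ with $C\cap\REC=\emptyset$ one can construct a generic $\langle G_n\rangle$ with $\{G_n\}\cap C=\emptyset$. A forcing relation $p\forces\varphi$ for sentences of $\Ltwo(\langle X_n\rangle_{n\in\omega})$ together with its truth lemma is set up in the standard way. The symmetric group action on $\PP$ and on $\Ltwo(\langle X_n\rangle_{n\in\omega})$ by permuting indices satisfies the same disjoint-support amalgamation property as before, so the argument that any two generic sequences produce $\omega$-models satisfying the same $\Ltwo$-theory carries over unchanged.

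That $M=\{G_n:n\in\omega\}$ is an $\omega$-model of $\WKLo$ will follow from density: for each recursive functional index $e$ and each tuple $\sigma=\langle n_1,\ldots,n_k\rangle$, the $\Pi^0_1$ condition ``$X_m$ is a path through the subtree $\{e\}(X_{n_1}\oplus\cdots\oplus X_{n_k})$ of $2^{<\omega}$, or that subtree is finite'' appears as some $S_{e^*}$ in the enumeration, and by K\"onig's lemma any $p\in\PP$ extends to $p\cup\{(e^*,\langle m,n_1,\ldots,n_k\rangle)\}\in\PP$ for $m$ outside the support of $p$. Analogous density clauses handle closure under Turing reducibility and $\oplus$, so $M\models\mathsf{RCA}_0+\WKLo$. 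Once the model-theoretic properties of $M$ are in place, the concluding paragraph of the proof of Theorem~\ref{thm:betadef} transfers unchanged: for any definable $A\in M$, take a second generic $M'$ arranged so that $M\cap M'=\REC$, and use the unique-existence sentence defining $A$ to check element by element that $A=A'\in\REC$.

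The main obstacle, and essentially the only new content, is verifying that the density requirements forcing $M\models\WKLo$ and those forcing closure under recursive operations are truly dense in $\PP$ and definable over $\REC$. Both facts are routine: compactness supplies the path-adding density, and the enumeration theorem supplies density and definability for the $\mathsf{RCA}_0$-closure conditions. With these in hand the remainder of the argument is a mechanical translation of the previous proof.
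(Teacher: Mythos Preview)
Your proposal is correct and follows exactly the approach the paper takes: the paper's own proof is the single sentence ``use exactly the same construction and argument as for Theorem~\ref{thm:betadef}, replacing $\Sigma^1_1$ sets by $\Pi^0_1$ subsets of $2^\omega$,'' and your write-up is a faithful elaboration of that. The one cosmetic point to tidy is the clause ``or that subtree is finite,'' which as stated is not literally $\Pi^0_1$; replace it by the standard fattened tree $T'$ (pad $T$ with all strings at levels where $T$ is empty) so that ``$X_m$ is a path through $T'$'' is genuinely a $\Pi^0_1$ condition and the density argument goes through verbatim.
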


\begin{proof}
  Use exactly the same construction and argument as for Theorem
  \ref{thm:betadef}, replacing $\Sigma^1_1$ sets by $\Pi^0_1$ subsets
  of $2^\omega$.
\end{proof}

\begin{rem}
  Theorem \ref{thm:wkldef} is originally due to Friedman \cite[Theorem
  1.10, unpublished]{f-sosoa} (see also \cite[Theorem 1.6]{f-icm}).
  It was later proved again by Simpson \nocite{rm2001}\cite{pizowkl}
  (see also Simp\-son/Ta\-na\-ka/Ya\-ma\-zaki \cite{sty}).  All three
  proofs of Theorem \ref{thm:wkldef} are different from one another.
\end{rem}

Let $\le_T$ denote Turing reducibility, i.e., $X\le_TY$ if and only if
$X$ is computable using $Y$ as an oracle.  The recursion-theoretic
analog of Theorem \ref{thm:betadef2} is:

\begin{thm}\label{thm:wkldef2}
  There exists a countable $\omega$-model of $\WKLo$ satisfying
  \begin{center}
    $(**)\qquad\forall X\,\forall Y\,($if $X$ is definable from
    $Y$, then $X\le_TY)$.
  \end{center}
\end{thm}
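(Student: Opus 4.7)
The plan is to mirror the proof of Theorem \ref{thm:betadef2} in the recursion-theoretic setting, building on the forcing construction from the proof of Theorem \ref{thm:wkldef}. Conditions $p$ are, as before, finite subsets of $\omega\times\omega^{<\omega}$, but now $(e,\langle n_1,\dots,n_k\rangle)\in p$ is read as ``$X_{n_1}\oplus\cdots\oplus X_{n_k}$ belongs to the $e$th $\Pi^0_1$ subset of the appropriate power of $2^\omega$.'' Relativization to a parameter $Y$ is defined in the obvious way, with $\HYP(Y)$ replaced throughout by $\REC(Y)=\{X:X\le_T Y\}$, and ``dense definable over $Y$'' interpreted at the analogous arithmetic level.

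The proof then splits into two lemmas paralleling Lemmas \ref{lem:betadef} and \ref{lem:gen2}. The first states: if $\langle G_n\rangle_{n\in\omega}$ is generic over $Y$, then $G_0=Y$ and $\{G_n:n\in\omega\}$ is an $\omega$-model of $\WKLo$ satisfying ``$\forall X\,($if $X$ is definable from $Y$, then $X\le_T Y)$.''  This is a routine relativization of Theorem \ref{thm:wkldef}; the forcing, the symmetric permutation argument, and the definability-of-forcing machinery all transfer. The second lemma states that every generic $\langle G_n\rangle_{n\in\omega}$ is generic over its own $G_0$, and is where the real work lies. Granted these, Theorem \ref{thm:wkldef2} follows exactly as Theorem \ref{thm:betadef2} followed from Lemmas \ref{lem:betadef} and \ref{lem:gen2}.

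To prove the second lemma, imitate the proof of Lemma \ref{lem:gen2}: given $p\forces(\D^{X_0}$ is dense in $\PP^{X_0})$, extract $p',q'\le p$ with $p'\forces q'\in\D^{X_0}$, set $S'=\{X_0:\exists\langle X_n\rangle_{n\in\omega}$ meeting $p'\}$, and seek an index $e$ with $S'=S_e$ so that $\{(e,\langle 0\rangle)\}$ forces $X_0\in S'$. This projection step is the main obstacle: in the $\Sigma^1_1$ setting it was free, since $\Sigma^1_1$ is closed under existential real quantifiers, but projections of $\Pi^0_1$ sets are not in general $\Pi^0_1$. The rescue is compactness of Cantor space: the projection of a $\Pi^0_1$ subset of $(2^\omega)^k$ onto any subset of coordinates is effectively $\Pi^0_1$, because by K\"onig's Lemma $x\notin\pi(B)$ iff the fiber $\{y:(x,y)\in B\}$ is empty, and emptiness of such a fiber is a uniformly $\Sigma^0_1$ condition in $x$. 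With $S'$ so encoded, the symmetric permutation trick of Lemma \ref{lem:gen2} — choose $\pi$ with $\pi(0)=0$ and $\supp(p')\cap\supp(\pi(p''))=\{0\}$, then derive a contradiction from $p'\cup\pi(p'')\in\PP$ — transfers verbatim, as does the verification (Claim 2) that $q'\cup\{(e,\langle 0\rangle)\}\in\PP$ by specializing a generic meeting $\{(e,\langle 0\rangle)\}$ to supply the tuple witnessing $q'\in\PP^{G'_0}$. This completes the sketch.
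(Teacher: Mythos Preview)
Your proposal is correct and follows exactly the approach of the paper, whose proof consists of the single sentence ``Use exactly the same construction and argument as for Theorem \ref{thm:betadef2}, replacing $\Sigma^1_1$ sets by $\Pi^0_1$ subsets of $2^\omega$.'' You have in fact gone further than the paper by isolating and resolving the one nontrivial step in the translation---that the projection $S'$ remains $\Pi^0_1$ by compactness of $2^\omega$---which the paper leaves entirely implicit.
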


\begin{proof}
  Use exactly the same construction and argument as for Theorem
  \ref{thm:betadef2}, replacing $\Sigma^1_1$ sets by $\Pi^0_1$ subsets
  of $2^\omega$.
\end{proof}

The recursion-theoretic analog of Theorems \ref{thm:atrstar} and
\ref{thm:tistar} is:

\begin{thm}\label{thm:wklstar}
  $\WKLo+(**)$ is conservative over $\WKLo$ for arithmetical sentences.
\end{thm}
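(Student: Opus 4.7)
The plan is to imitate the proof of Theorem~\ref{thm:atrstar}, replacing the hyperarithmetical forcing of \S\ref{sec:main} by its recursion-theoretic analog (the one used in Theorems~\ref{thm:wkldef} and~\ref{thm:wkldef2}), and generalizing the construction of \S\ref{sec:cons} to this setting. Concretely, given a countable $(N,\mathcal{S})\models\WKLo$, I would define $\PP^{\mathrm{rec}}_{(N,\mathcal{S})}$ to consist of finite $p\subseteq\omega\times\omega^{<\omega}$ for which $(N,\mathcal{S})\models$ ``there exists $\langle X_n\rangle_{n\in\omega}$ meeting $p$'', with the enumeration $S_e$ now listing $\Pi^0_1$ subsets of $2^\omega$. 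Genericity of $\langle G_n\rangle_{n\in|N|}$ over $(N,\mathcal{S})$ means meeting every dense $\D\subseteq\PP^{\mathrm{rec}}_{(N,\mathcal{S})}$ arithmetically definable over $(N,\mathcal{S})$, and I set $\mathcal{S}'=\{G_n:n\in|N|\}$.

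The proof then rests on two facts about this construction. First, $(N,\mathcal{S}')\models(**)$; this is the recursion-theoretic analog of Lemma~\ref{lem:star} and is verified by repeating the permutation/symmetry argument of Theorem~\ref{thm:betadef2} with $\Pi^0_1$ replacing $\Sigma^1_1$ throughout, exactly as in Theorem~\ref{thm:wkldef2}. Second, $(N,\mathcal{S}')\models\WKLo$; this is the relativized analog of Theorem~\ref{thm:wkldef} and requires a direct density argument, using a fixed uniform $\Pi^0_1$ index $e_*$ for the relation ``$Y$ is a path through the binary subtree coded by $X$'' to arrange that each $G_{n_0}$ which is an infinite binary subtree in $\mathcal{S}'$ has some $G_m\in[G_{n_0}]$ in $\mathcal{S}'$.

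With these in hand the conservation is immediate. Let $\varphi$ be arithmetical with $\WKLo\not\proves\varphi$, and pick a countable $(N,\mathcal{S})\models\WKLo+\lnot\varphi$. Build $\mathcal{S}'$ as above. By the two facts, $(N,\mathcal{S}')\models\WKLo+(**)$, and since $\varphi$ is first-order its truth depends only on $N$, so $(N,\mathcal{S}')\models\lnot\varphi$ as well; thus $\WKLo+(**)\not\proves\varphi$.

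The principal obstacle I anticipate is the second fact, $(N,\mathcal{S}')\models\WKLo$. In the hyperarithmetical setting, $\ATRo$ in $\mathcal{S}'$ was obtained for free from the $\Pi^1_2$-preservation of Lemma~\ref{lem:pres}, whose proof rewrote a $\Pi^1_2$ sentence as $\forall X\,(X\in S_e)$ for $\Sigma^1_1$ $S_e$. In the present recursion-theoretic forcing the conditions only mention $\Pi^0_1$ sets, so the analogous preservation lemma covers only sentences $\forall X\,(X\in T_e)$ for $\Pi^0_1$ $T_e$, which does not include the $\Pi^1_2$ sentence $\WKLo$. Hence $\WKLo$ in $\mathcal{S}'$ cannot be read off from a uniform preservation theorem and must be established by hand via the density argument sketched above; the delicate point is that the trees requiring paths are not given by fixed ground-model indices but are themselves built from the $G_n$'s, which is precisely what the uniform index $e_*$ accommodates by making the relevant dense set ``$(e_*,\langle n_0,m\rangle)\in q$ for some $m$'' depend only on $n_0$ rather than on the tree.
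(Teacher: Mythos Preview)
Your proposal is correct and follows the paper's intended route; the paper's own proof is literally the single sentence ``The proof is analogous to the arguments of \S\ref{sec:cons},'' so your write-up is in fact more detailed than the original. Your identification of the one genuine wrinkle---that the analog of Lemma~\ref{lem:pres} only preserves sentences of the form $\forall X\,(X\in S_e)$ for $\Pi^0_1$ $S_e$, so $\WKLo$ in $(N,\mathcal{S}')$ must be secured by a direct density argument rather than by blanket $\Pi^1_2$-preservation---is exactly right, and your observation that arithmetical $\varphi$ depend only on $N$ makes the transfer of $\lnot\varphi$ trivial. One small caution: the dense set $\{q:\exists m\,(e_*,\langle n_0,m\rangle)\in q\}$ is only dense if $S_{e_*}$ is chosen so that $\{Y:X\oplus Y\in S_{e_*}\}$ is nonempty for \emph{every} $X$ (not just for $X$ coding infinite trees), e.g.\ by taking $S_{e_*}$ to say ``for all $k$, if $X$ restricted to levels ${\le}k$ is a tree with a node at level $k$, then $Y\!\upharpoonright\!k\in X$''; with that choice your argument goes through.
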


\begin{proof}
  The proof is analogous to the arguments of \S\ref{sec:cons}.  
\end{proof}

\begin{rem}
  Theorems \ref{thm:wkldef2} and \ref{thm:wklstar} are originally due
  to Simpson \cite{pizowkl}.  The proofs given here are different from
  the proofs that were given in \cite{pizowkl}.
\end{rem}

\section{Some additional results}
\label{sec:friedman}

In this section we present some additional results and open questions.

\begin{lem}\label{lem:s11p}
  Let $\varphi(X)$ be a $\Sigma^1_1$ formula with no free set
  variables other than $X$.  The following is provable in $\ATRo$.  If
  $\exists X\,(X\notin\HYP$ and $\varphi(X))$, then $\exists P\,(P$ is
  a perfect tree and $\forall X\,($if $X$ is a path through $P$ then
  $\varphi(X)))$.
\end{lem}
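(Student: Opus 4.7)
The plan is to prove this effective perfect-set theorem for $\Sigma^1_1$ sets by establishing a splitting lemma and then building a perfect tree by recursion, all inside $\ATRo$.

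First, by the $\Sigma^1_1$ normal form (available in $\ATRo$), I would represent $A=\{X:\varphi(X)\}$ as the $X$-projection of the body of a recursive tree $T\subseteq(\omega\times\omega)^{<\omega}$. Call a node $\langle\sigma,\tau\rangle\in T$ \emph{fat} if the set $\{X\supseteq\sigma:\exists Y\supseteq\tau,\,\langle X,Y\rangle\in[T]\}$ is not contained in $\HYP$. By hypothesis the root node is fat.

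The key step is the splitting lemma: every fat $\langle\sigma,\tau\rangle$ has two $X$-incompatible fat extensions in $T$. To prove this in $\ATRo$, suppose the contrary; then the fat extensions of $\langle\sigma,\tau\rangle$ form a chain in the $X$-coordinate. If this chain is finite, with maximum $X$-level $\sigma^*$, then every immediate extension of $\sigma^*$ is non-fat, and a $\Sigma^1_1$-boundedness argument (together with the fact that a $\Sigma^1_1$ set contained in $\HYP$ is uniformly $\HYP$) forces the relevant union to lie in $\HYP$, contradicting the fatness of $\langle\sigma,\tau\rangle$. If the chain is infinite, it determines a unique path $X^*$ definable by arithmetical recursion along a $\Sigma^1_1$ predicate, and the Gandy/Kreisel/Tait theorem \cite[Theorem VIII.6.7]{sosoa} forces $X^*\in\HYP$; but any non-$\HYP$ witness of fatness must be $X^*$ itself, again a contradiction.

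Given the splitting lemma, build by arithmetical recursion on $2^{<\omega}$ a family $\{\langle\sigma_\rho,\tau_\rho\rangle:\rho\in 2^{<\omega}\}\subseteq T$ of fat nodes with $\sigma_{\rho\frown 0},\sigma_{\rho\frown 1}$ $X$-incompatible extensions of $\sigma_\rho$ and $\tau_{\rho\frown i}\supseteq\tau_\rho$. Setting $P=\{\sigma\in\omega^{<\omega}:\sigma\subseteq\sigma_\rho\text{ for some }\rho\}$, the tree $P$ is perfect; and any path $X=\bigcup_s\sigma_{f\restriction s}\in[P]$ has accompanying witness $Y=\bigcup_s\tau_{f\restriction s}$ with $\langle X,Y\rangle\in[T]$, so $\varphi(X)$ holds. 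The main obstacle will be the splitting lemma itself: its proof requires carefully using the Gandy/Kreisel/Tait theorem and $\Sigma^1_1$-boundedness to convert a failure of splitting into a $\HYP$-containment that contradicts the fatness hypothesis, while staying inside $\ATRo$.
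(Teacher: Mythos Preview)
Your sketch is essentially correct and is, in fact, far more detailed than what the paper does: the paper's ``proof'' is nothing but a citation to Simpson \cite[\S\S V.4 and VIII.3]{sosoa} and Sacks \cite[\S III.6]{sacks-hrt}, treating the lemma as a known black box. So there is no clash of approaches to discuss; you are simply supplying an argument where the paper supplies a reference.

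That said, two points in your write-up deserve tightening. First, your finite-chain case is over-engineered. If $\sigma^*$ is maximal among the $X$-parts of fat extensions and $(\sigma^*,\tau^*)$ is fat, take any witness $(X,Y)\in[T]$ with $X\supseteq\sigma^*$, $Y\supseteq\tau^*$, $X\notin\HYP$; then the next node along $(X,Y)$ is already a fat extension with $X$-part properly extending $\sigma^*$, contradicting maximality. No boundedness is needed. In the infinite-chain case you do not need Gandy/Kreisel/Tait either: the chain is a $\Sigma^1_1$ set of strings, so $X^*(n)=k$ and $X^*(n)\ne k$ are both $\Sigma^1_1$, giving $X^*\in\Delta^1_1=\HYP$ directly.

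Second, and more substantively, the phrase ``arithmetical recursion on $2^{<\omega}$'' is not accurate: the predicate ``fat'' is genuinely $\Sigma^1_1$ (because $X\notin\HYP$ is $\Sigma^1_1$), so the step function of your recursion is not arithmetical. What you actually need is a form of $\Sigma^1_1$ dependent number-choice along $2^{<\omega}$, which is available in $\ATRo$ (via $\Sigma^1_1$-$\mathsf{AC}_0$, provable in $\ATRo$, together with a routine coding of the finite levels), but you should say so explicitly rather than calling it arithmetical recursion. Once these two points are addressed, your argument goes through and matches the standard proof underlying the paper's citations.
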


\begin{proof}
  This is a well known consequence of formalizing the Perfect Set
  Theorem within $\ATRo$.  See Simpson \cite[\S\S V.4 and
  VIII.3]{sosoa}.  See also Sacks \cite[\S III.6]{sacks-hrt}.
\end{proof}

\begin{lem}\label{lem:s12p}
  Let $\varphi(X)$ be a $\Sigma^1_2$ formula with no free set
  variables other than $X$.  The following is provable in $\ATRo\,+{}$
  ``all ordinals are recursive''.  If $\exists X\,(X\notin\HYP$ and
  $\varphi(X))$, then $\exists P\,(P$ is a perfect tree and $\forall
  X\,($if $X$ is a path through $P$ then $\varphi(X)))$.
\end{lem}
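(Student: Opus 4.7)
The plan is to reduce Lemma \ref{lem:s12p} to Lemma \ref{lem:s11p} by exploiting ``all ordinals are recursive'' to replace the outer real quantifier of the $\Sigma^1_2$ formula by an existential quantification over rank functions into a specific recursive well ordering, yielding a $\Sigma^1_1$ formula with the same non-hyperarithmetic witness.

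First I would put $\varphi$ into $\Sigma^1_2$ normal form $\varphi(X)\equiv\exists Y\,\psi(X,Y)$ with $\psi$ being $\Pi^1_1$, and then use the $\Pi^1_1$ normal form available in $\ATRo$ to take $\psi(X,Y)$ to say ``$T_{X,Y}$ is well founded'' for a tree $T_{X,Y}\subseteq\omega^{<\omega}$ recursive uniformly in $X\oplus Y$. Assume $X_0\notin\HYP$ with $\varphi(X_0)$, and fix $Y_0$ so that $T_{X_0,Y_0}$ is well founded. Inside $\ATRo$ the rank of $T_{X_0,Y_0}$ exists as the order type of a well ordering in the model; by ``all ordinals are recursive'' this well ordering is isomorphic to some recursive one, and I fix a natural number $e_0$ such that $<_{e_0}$ is a recursive well ordering of length at least $\mathrm{rank}(T_{X_0,Y_0})$.

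With $e_0$ now a fixed numerical parameter, define
\[
\theta(X)\;\equiv\;\exists Y\,\exists f\,(f\colon T_{X,Y}\to\field(<_{e_0})\text{ and }\forall\sigma,\tau\in T_{X,Y}\,(\sigma\subsetneq\tau\Rightarrow f(\tau)<_{e_0}f(\sigma))).
\]
This is $\Sigma^1_1$ with $X$ as its only free set variable. Since $<_{e_0}$ is externally known to be a well ordering, any $(Y,f)$ witnessing $\theta(X)$ forces $T_{X,Y}$ to be well founded (an infinite ascending chain in $T_{X,Y}$ would map to an infinite $<_{e_0}$-descending chain), so $\theta(X)\to\varphi(X)$; conversely $\theta(X_0)$ is witnessed by $Y_0$ together with the rank function of $T_{X_0,Y_0}$ into $<_{e_0}$. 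Applying Lemma \ref{lem:s11p} to $\theta$, using $X_0$ as its non-hyperarithmetic witness, then produces a perfect tree $P$ all of whose paths satisfy $\theta$, hence $\varphi$.

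The main obstacle is the routine but nontrivial formalization inside $\ATRo$: one must verify that the $\Pi^1_1$ tree representation of $\psi$, the definition and basic properties of the rank of a well founded tree, and the extraction from ``all ordinals are recursive'' of a concrete recursive index $e_0$ with length at least $\mathrm{rank}(T_{X_0,Y_0})$, can all be carried out in the hyperarithmetical theory available in $\ATRo$ (as cited in the proof of Lemma \ref{lem:ordrec}), and that the resulting $\theta$ really is $\Sigma^1_1$ with no free set variables beyond $X$.
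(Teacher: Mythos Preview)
Your proposal is correct and follows essentially the same route as the paper: the paper defines $\varphi_\alpha(X)\equiv\exists Y\,(T_{X,Y}$ is well founded of height $\le\alpha)$ for $\alpha<\omega_1^{\mathrm{CK}}$, observes this is $\Sigma^1_1$, and applies Lemma~\ref{lem:s11p} to the appropriate $\varphi_\alpha$. Your $\theta(X)$ is exactly $\varphi_\alpha(X)$ for $\alpha$ the order type of $<_{e_0}$, with the $\Sigma^1_1$ form made explicit via the rank-function characterization of bounded height; the only quibble is that $<_{e_0}$ is a well ordering \emph{internally} (you chose it inside the model), not merely ``externally'', which is what makes $\theta(X)\to\varphi(X)$ go through within $\ATRo$.
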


\begin{proof}
  Since $\varphi(X)$ is $\Sigma^1_2$, we can write
  $\varphi(X)\equiv\exists Y\,\forall f\,\exists n\,R(X[n],Y[n],f[n])$
  where $R$ is a primitive recursive predicate.  Let $T_{X,Y}$ be the
  tree consisting of all $\tau$ such that $\lnot\,(\exists
  n\le\lh(\tau))\,R(X[n],Y[n],\tau[n])$.  For
  $\alpha<\omega_1^{\mathrm{CK}}$ put
  \begin{center}
    $\varphi_\alpha(X)\,\,\equiv\,\,\exists Y\,(T_{X,Y}$ is well
    founded of height $\le\alpha)$.
  \end{center}
  Note that, for each $\alpha<\omega_1^{\mathrm{CK}}$,
  $\varphi_\alpha(X)$ is $\Sigma^1_1$.  Reasoning in $\ATRo\,+{}$
  ``all ordinals are recursive'', we have $\forall X\,(\varphi(X)$ if
  and only if
  $(\exists\alpha<\omega_1^{\mathrm{CK}})\,\varphi_\alpha(X))$.  Thus
  Lemma \ref{lem:s12p} follows easily from Lemma \ref{lem:s11p}.
\end{proof}

\begin{thm}\label{thm:s12p}
  Let $T$ be $\ATRo$ or $\Pi^1_\infty$-$\TIo$.  Let $\varphi(X)$ be a
  $\Sigma^1_2$ formula with no free set variables other than $X$.  If
  $T\proves\exists X\,(X\notin\HYP$ and $\varphi(X))$, then
  $T\proves\exists P\,(P$ is a perfect tree and $\forall X\,($if $X$
  is a path through $P$ then $\varphi(X)))$.
\end{thm}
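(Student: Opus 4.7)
The plan is to combine Lemma~\ref{lem:s12p} with the $\Sigma^1_2$-conservation Theorems~\ref{thm:atrstar} and~\ref{thm:tistar}. Suppose $T\proves\exists X\,(X\notin\HYP\wedge\varphi(X))$. A fortiori $T+(*)$ proves the same sentence, and by Lemma~\ref{lem:ordrec} together with the remark that follows it, $T+(*)\proves$ ``all ordinals are recursive''. Hence Lemma~\ref{lem:s12p} applies inside $T+(*)$ to yield $T+(*)\proves\exists P\,(P$ is a perfect tree and $\forall X\,($if $X$ is a path through $P$ then $\varphi(X)))$. The remaining task is to descend back to $T$ via $\Sigma^1_2$-conservation.

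The obstacle is complexity: as displayed, the conclusion is naively $\Sigma^1_3$, because $\varphi$ is $\Sigma^1_2$ and we have an outer $\exists P\,\forall X$. To bring it down to $\Sigma^1_2$, I would trace the proof of Lemma~\ref{lem:s12p}, which reduces via $\varphi_\alpha$ to the Perfect Set Theorem (Lemma~\ref{lem:s11p}) applied to the $\Sigma^1_1$ set $\{X:\varphi_\alpha(X)\}$. Read carefully, that argument produces more than a perfect tree: it yields a recursive well-ordering $<_e$ (with $|{<}_e|=\alpha$), a monotone injective Cantor scheme $F:2^{<\omega}\to\omega^{<\omega}$ encoding the perfect tree, a monotone scheme $G:2^{<\omega}\to\omega^{<\omega}$ encoding a continuous function $g:2^\omega\to\omega^\omega$ such that $g(\sigma)$ is a witness $Y$ for $\varphi_\alpha(f(\sigma))$, and a continuous rank-function assignment on the trees $T_{f(\sigma),g(\sigma)}$ into ${<}_e$.

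Let $\chi'$ denote this strengthened statement. Its complexity is $\Sigma^1_2$: the outer quantifiers over $e$, $F$, $G$, and the rank data are set-existential, while the inner ``$\forall\sigma\in 2^\omega$'' collapses to an arithmetical condition on those parameters, because by continuity of $F,G$ each finite portion of $(f(\sigma),g(\sigma))$, together with the corresponding finite portion of the rank function, depends only on a finite initial segment of $\sigma$. Hence $T+(*)\proves\chi'$ with $\chi'\in\Sigma^1_2$, and Theorem~\ref{thm:atrstar} or~\ref{thm:tistar} gives $T\proves\chi'$. Since $\chi'$ trivially implies the desired conclusion over any reasonable base theory, $T\proves\exists P\,(P$ is a perfect tree and $\forall X\,($if $X$ is a path through $P$ then $\varphi(X)))$.

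The one nontrivial step is the extraction of the continuous witness and rank data from the $\Sigma^1_1$ Perfect Set Theorem inside $\ATRo$; this is bookkeeping through the standard proof (as in Simpson \cite[\S\S V.4, VIII.3]{sosoa} and Sacks \cite[\S III.6]{sacks-hrt}), verifying that the perfect tree is actually built alongside its witnesses at each splitting stage rather than merely asserted to exist a posteriori.
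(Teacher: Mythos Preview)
Your argument is correct, but it follows a genuinely different path from the paper's.  The paper does \emph{not} invoke the conservation Theorems~\ref{thm:atrstar} and~\ref{thm:tistar} at all.  Instead it uses a classical dichotomy provable directly in $T$ (from Friedman~\cite{f-bi} and \cite[\S VII.2]{sosoa}): either (1) all ordinals are recursive, or (2) there exists a countable coded $\beta$-model $M$ of $T+{}$``all ordinals are recursive''.  In case~(1) Lemma~\ref{lem:s12p} applies outright.  In case~(2) one runs the proof of Lemma~\ref{lem:s12p} inside $M$ to extract an $\alpha<\omega_1^{\mathrm{CK}}$ with $M\models\exists X\,(X\notin\HYP\wedge\varphi_\alpha(X))$; by $\beta$-model absoluteness this $\Sigma^1_1\wedge\Pi^1_1$ statement reflects outward, and then Lemma~\ref{lem:s11p} applied to $\varphi_\alpha$ finishes the argument.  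The paper's route is shorter and entirely self-contained within \S\ref{sec:friedman}: it needs no forcing and no complexity massage of the conclusion.  Your route, by contrast, is a pleasant application of the $\Sigma^1_2$-conservation machinery built in \S\S\ref{sec:main}--\ref{sec:cons}, but the price is exactly the step you flag as ``bookkeeping'': you must open up the proof of the $\Sigma^1_1$ perfect set theorem to package the perfect tree together with continuous $Y$-witnesses and continuous rank functions into a single $\Sigma^1_2$ sentence $\chi'$.  That packaging is genuinely available (the standard proof in \cite[\S V.4]{sosoa} builds the perfect subtree in the product space and hence carries the witnesses along), so your argument goes through; it is simply heavier than the paper's, which sidesteps the complexity issue entirely by never leaving~$T$.
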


\begin{proof}
  From Friedman \cite{f-bi} or Simpson \cite[\S VII.2]{sosoa}, we have
  that $T\proves$ the disjunction (1) all ordinals are recursive, or
  (2) there exists a countable coded $\beta$-model $M$ satisfying
  $T\,+{}$ ``all ordinals are recursive''.  In case (1), the desired
  conclusion follows from Lemma \ref{lem:s12p}.  In case (2), we have
  $M\models\exists X\,(X\notin\HYP$ and $\varphi(X))$, so the proof of
  Lemma \ref{lem:s12p} within $M$ gives
  $\alpha<\omega_1^{\mathrm{CK}}$ such that $M\models\exists
  X\,(X\notin\HYP$ and $\varphi_\alpha(X))$.  It follows that $\exists
  X\,(X\notin\HYP$ and $\varphi_\alpha(X))$.  We can then apply Lemma
  \ref{lem:s11p} to $\varphi_\alpha(X)$ to obtain the desired
  conclusion.
\end{proof}

\begin{cor}\label{cor:s12p}
  Let $T$ and $\varphi(X)$ be as in Theorem {\rm\ref{thm:s12p}}.  If
  $T\proves\exists X\,(X\notin\HYP$ and $\varphi(X))$, then
  $T\proves\forall Y\,\exists X\,(\varphi(X)$ and $\forall
  n\,(X\ne(Y)_n))$.
\end{cor}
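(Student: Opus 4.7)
The plan is to reduce the corollary to Theorem~\ref{thm:s12p} and then invoke a standard perfect tree avoidance argument that is easily formalizable in $\ATRo$.

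First, I would apply Theorem~\ref{thm:s12p} to the hypothesis, obtaining that $T\proves\exists P\,(P$ is a perfect tree and every path through $P$ satisfies $\varphi)$. Reasoning in $T$, fix such a $P$ and an arbitrary $Y$. Writing $Z_n=(Y)_n$, the remaining task is to construct a path $X$ through $P$ with $X\ne Z_n$ for all $n$; the hypothesis on $P$ then yields $\varphi(X)$, which is what is wanted.

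The construction is the usual diagonalization using perfectness. Build a sequence of nodes $\sigma_0\prec\sigma_1\prec\cdots$ in $P$ with $\lh(\sigma_n)\ge n$ as follows. Having $\sigma_n\in P$, choose $\sigma_{n+1}\in P$ extending $\sigma_n$ and incompatible with $Z_n$: by perfectness of $P$, there exist incomparable $\tau_0,\tau_1\in P$ both extending $\sigma_n$, and since $\tau_0,\tau_1$ are incompatible with each other at least one of them is incompatible with $Z_n$. (If $Z_n$ is not a path through $P$, any extension of $\sigma_n$ in $P$ of sufficient length is automatically incompatible with $Z_n$.) Then $X=\bigcup_n\sigma_n$ is a path through $P$ differing from every $Z_n$.

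The only point to check is that this construction goes through in $T$. The sequence $\langle\sigma_n\rangle$ is defined by arithmetical recursion from parameters $P$ and $Y$, since the clause ``$\tau\in P$ extends $\sigma_n$ and is incompatible with $Z_n$'' is arithmetical and a witness is chosen by the least-element operator on $\omega^{<\omega}$. Arithmetical recursion is certainly available in $\ATRo$, so $X$ exists and satisfies the required properties. Since $Y$ was arbitrary, $T\proves\forall Y\,\exists X\,(\varphi(X)\land\forall n\,(X\ne(Y)_n))$. I do not anticipate a serious obstacle beyond bookkeeping, since the perfect tree produced by Theorem~\ref{thm:s12p} does all the real work.
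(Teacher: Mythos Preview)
Your proposal is correct and is precisely the standard argument the paper has in mind: the paper's proof reads in its entirety ``This follows easily from Theorem~\ref{thm:s12p},'' and you have simply spelled out the obvious perfect-tree diagonalization against the sequence $\langle (Y)_n\rangle$, which is indeed straightforward in $\ATRo$ (in fact already in $\mathsf{RCA}_0$ once $P$ is given).
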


\begin{proof}
  This follows easily from Theorem \ref{thm:s12p}.
\end{proof}

\begin{thm}\label{thm:s12}
  Let $T$ and $\varphi(X)$ be as in Theorem {\rm\ref{thm:s12p}}.  If
  $T\proves(\exists$ exactly one $X)\,\varphi(X)$, then
  $T\proves\exists X\,(X\in\HYP$ and $\varphi(X))$.
\end{thm}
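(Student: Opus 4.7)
My plan mirrors the case split used in the proof of Theorem~\ref{thm:s12p}. I would first invoke the disjunction from Friedman \cite{f-bi} (see Simpson \cite[\S VII.2]{sosoa}) that $T$ proves either (1) all ordinals are recursive, or (2) there exists a countable coded $\beta$-model $M$ satisfying $T\,+{}$ ``all ordinals are recursive''. It then suffices to derive the conclusion $\chi := \exists X\,(X\in\HYP$ and $\varphi(X))$ in $T$ under each disjunct.

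In case (1), I would work inside $T\,+{}$ ``all ordinals are recursive'' and argue by contradiction. The uniqueness hypothesis yields at least one $\varphi$-witness; if none lay in $\HYP$, then $\exists X\,(X\notin\HYP$ and $\varphi(X))$ holds, so Lemma~\ref{lem:s12p} produces a perfect tree $P$ every path of which satisfies $\varphi$. A perfect tree carries at least two distinct paths (a routine splitting argument, easily formalized in $\ATRo$), so we obtain two distinct $\varphi$-witnesses, contradicting uniqueness. Hence the unique witness lies in $\HYP$, so $\chi$ holds.

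In case (2), I would fix the coded $\beta$-model $M\models T\,+{}$ ``all ordinals are recursive''. Since $M\models T$, we have $M\models(\exists$ exactly one $X)\,\varphi(X)$, and re-running the case~(1) derivation inside $M$ gives $M\models\chi$. Since $X\in\HYP$ is $\Sigma^1_1$ and $\varphi$ is $\Sigma^1_2$, the sentence $\chi$ is $\Sigma^1_2$, and such sentences transfer upward from coded $\beta$-models because $\Pi^1_1$ formulas are absolute for $\beta$-models. Hence $\chi$ holds in the ambient model.

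The main technical point I expect to verify is that the case~(1) derivation is genuinely formalizable inside the $\beta$-model $M$ of case~(2); for this I need Lemma~\ref{lem:s12p} to be available in $\ATRo\,+{}$ ``all ordinals are recursive'' as interpreted in $M$ (which it is, by the statement of the lemma), together with the two-paths property for perfect trees inside $M$ (standard). The remaining ingredients—$\Sigma^1_2$ upward absoluteness for coded $\beta$-models and the Friedman/Simpson disjunction—are standard results available within $T$, so no additional obstacles are anticipated.
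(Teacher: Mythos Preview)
Your proposal is correct and follows essentially the same route as the paper: the Friedman/Simpson disjunction, then a perfect-tree contradiction in case~(1), and a reflection out of the coded $\beta$-model in case~(2); the paper's write-up differs only cosmetically, reducing explicitly to the $\Sigma^1_1$ approximations $\varphi_\alpha$ and invoking Lemma~\ref{lem:s11p} rather than quoting Lemma~\ref{lem:s12p} as a black box. One small slip: $X\in\HYP$ is $\Pi^1_1$, not $\Sigma^1_1$, but this is harmless since $\chi$ is still $\Sigma^1_2$ and therefore upward-absolute for $\beta$-models as you claim.
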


\begin{proof}
  Consider cases (1) and (2) as in the proof of Theorem
  \ref{thm:s12p}.  In both cases it suffices to show that, for all
  $\alpha<\omega_1^{\mathrm{CK}}$, if $(\exists$ exactly one
  $X)\,\varphi_\alpha(X)$ then $\exists X\,(X\in\HYP$ and
  $\varphi_\alpha(X))$.  This follows from Lemma \ref{lem:s11p}
  applied to $\varphi_\alpha(X)$.
\end{proof}

\begin{rem}
  Theorems \ref{thm:s12p} and \ref{thm:s12} appear to be new.
  Corollary \ref{cor:s12p} has been stated without proof by Friedman
  \cite[Theorems 3.4 and 4.4]{f-icm}.  A recursion-theoretic analog of
  Corollary \ref{cor:s12p} has been stated without proof by Friedman
  \cite[Theorem 1.7]{f-icm}, but this statement of Friedman is known
  to be false, in view of Simpson \cite{pizowkl}.  A
  recursion-theoretic analog of Theorem \ref{thm:s12} has been proved
  by Simpson/Tanaka/Yamazaki \cite{sty}.
\end{rem}

\begin{que}
  Suppose $\WKLo\proves\exists X\,(X\notin\REC$ and $\varphi(X))$
  where $\varphi(X)$ is $\Sigma^1_1$, or even arithmetical, with no
  free set variables other than $X$.  Does it follow that
  $\WKLo\proves\exists X\,\exists Y\,(X\ne
  Y\land\varphi(X)\land\varphi(Y))$?  A similar question has been
  asked by Friedman \cite[unpublished]{f-sosoa}.
\end{que}

\begin{que}
  Suppose $\WKLo\proves(\exists$ exactly one $X)\,\varphi(X)$ where
  $\varphi(X)$ is $\Sigma^1_1$ with no free set variables other than
  $X$.  Does it follow that $\WKLo\proves\exists X\,(X\in\REC$ and
  $\varphi(X))$?  If $\varphi(X)$ is arithmetical or $\Pi^1_1$ then
  the answer is yes, by Simp\-son/Ta\-naka/Yama\-zaki \cite{sty}.
\end{que}


\end{document}